\numberwithin{equation}{section}
\newtheorem{thrm}{Theorem}[section]
\newtheorem{lemma}[thrm]{Lemma}
\newtheorem{prop}[thrm]{Proposition}
\newtheorem{cor}[thrm]{Corollary}
\newtheorem{dfn}[thrm]{Definition}
\newtheorem*{rmrk*}{Remark}
\newtheorem{conv}[thrm]{Convention}
\newcommand{\QH}{\boldsymbol {G\,(\mathbb{H})}}
\newcommand{\A}{{A}}
\newcommand{\B}{{B}}
\newcommand{\C}{{C}}
\newcommand{\D}{{D}}
\newcommand{\spl}{\mathfrak{ sp}(1)}
\newcommand{\lc}{\langle}
\newcommand{\rc}{\rangle}
\begin{document}

\begin{abstract}
 The main result is that the qc-scalar curvature of a seven dimensional quaternionic contact Einstein manifold is a constant. In addition, we characterize qc-Einstein structures with certain flat vertical connection and develop their local structure equations. Finally, regular qc-Ricci flat structures are shown to fiber over hyper-K\"ahler manifolds.
\end{abstract}

\keywords{quaternionic contact structures, qc conformal flatness, qc
conformal curvature, Einstein structures}
\subjclass[2010]{58G30, 53C17}
\title[Quaternionic contact  Einstein manifolds]
{Quaternionic contact  Einstein manifolds}
\date{\today}

\author{Stefan Ivanov}
\address[Stefan Ivanov]{University of Sofia, Faculty of Mathematics and Informatics,
blvd. James Bourchier 5, 1164,
Sofia, Bulgaria} \email{ivanovsp@fmi.uni-sofia.bg}

\author{Ivan Minchev}
\address[Ivan Minchev]{University of Sofia, Faculty of Mathematics and Informatics,
blvd. James Bourchier 5, 1164,
Sofia, Bulgaria}
\email{minchev@fmi.uni-sofia.bg}

\author{Dimiter Vassilev}
\address[Dimiter Vassilev]{ Department of Mathematics and Statistics\\
University of New Mexico\\
Albuquerque, New Mexico, 87131-0001}
\email{vassilev@math.unm.edu}

\maketitle

%\date{April 17, 2006}

\setcounter{tocdepth}{1} \tableofcontents
\section{Introduction}
Following the work of Biquard \cite{Biq1}  quaternionic contact (qc) manifolds describe the  Carnot-Carath\'eodory geometry  on the conformal boundary at infinity of quaternionic K\"ahler manifolds.  The qc geometry also became  a crucial geometric tool in finding the extremals and the best constant in the $L^2$ Folland-Stein Sobolev-type embedding on the quaternionic Heisenberg groups  \cite{F2,FS},  see  \cite{IMV,IMV2,IMV3}. An extensively studied class of  quaternionic contact structures are provided by the 3-Sasakian manifolds.   From the point of view of qc geometry, 3-Sasakian structures are qc manifolds whose torsion endomorphism of the Biquard connection vanishes. In turn, the latter property is equivalent to the qc structure being qc-Einstein, i.e., the trace-free part of the qc-Ricci tensor vanishes, see \cite{IMV}. The qc-scalar curvature of a 3-Sasakian manifold is a non-zero constant. Conversely, it was shown in \cite{IMV,IV2}  that the Biquard torsion is the obstruction for a given qc structure to be locally 3-Sasakian provided the qc-scalar curvature $Scal$ is a non zero constant. Furthermore,  as a consequence of the Bianchi identities,  \cite[Theorem 4.9]{IMV} shows that the qc-scalar curvature of a qc-Einstein manifold of dimension  at least eleven is constant while the seven dimensional case was left open.

 The main purpose of this paper is to show that the qc-scalar curvature of a seven dimensional qc-Einstein manifold is constant, i.e., to prove the following
\begin{thrm}\label{t:main}
If $M$ is a qc-Einstein qc manifold of dimension seven, then, the qc-scalar curvature is  a constant,  $Scal=const$.
\end{thrm}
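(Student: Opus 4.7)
The plan is to isolate the one step in the proof of \cite[Theorem 4.9]{IMV} that breaks down in dimension seven and supply a replacement. The earlier argument contracts the second Bianchi identity of the Biquard connection with the qc-Einstein condition to obtain two scalar identities: one forcing the vertical derivatives $\xi_a(Scal)$ to vanish with a coefficient nonzero in every dimension of interest, and one forcing the horizontal derivatives to vanish with a coefficient proportional to $n-1$. When $\dim M = 4n+3 = 7$, so $n=1$, the second coefficient vanishes and the argument yields no information about $dScal\rvert_H$. The vertical identity, however, is unaffected, so I would begin by reproducing it to conclude $dScal\rvert_V = 0$; this reduces the theorem to showing that the horizontal gradient of $Scal$ vanishes on a seven-dimensional qc-Einstein manifold.

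To handle the horizontal part, I would exploit the fact that qc-Einstein is equivalent to the vanishing of the Biquard torsion endomorphism, so the full curvature of the Biquard connection is rigidly determined by $Scal$ together with horizontal data of $Sp(1)$-type. The plan is to develop, as the abstract indicates, local structure equations for qc-Einstein manifolds by choosing an adapted horizontal coframe and expressing the connection $1$-forms, torsion $2$-forms and curvature $2$-forms entirely in terms of $Scal$ and the vertical $Sp(1)$-connection. The integrability conditions $d^2=0$ applied to these structure equations produce additional identities which, for $n=1$, should encode exactly the missing information.

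The crucial step is to extract from these integrability conditions a relation that forces $dScal\rvert_H = 0$. I expect this to emerge either as a direct identity after substituting the already-known vanishing of the vertical derivatives, or as a pointwise algebraic identity of the form $f(Scal)\cdot dScal\rvert_H=0$ that can be refined by a further differentiation. Either way the combination of the first Bianchi identity (with torsion), the differentiated second Bianchi identity, and the $d^2=0$ identities for the local structure equations should close the argument.

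The main obstacle is the exceptional status of $n=1$ itself: the linear Bianchi contraction that suffices for $n\geq 2$ degenerates, so one must climb to second-order information, and the argument will necessarily involve more delicate tensor manipulations specific to the seven-dimensional case. Setting up clean local structure equations and carrying out the $d^2$ computations without an explosion of indices is the technical heart of the plan.
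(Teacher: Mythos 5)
Your reduction to the horizontal part is fine: the vertical vanishing $dS(\xi_s)=0$ does hold in dimension seven (the paper derives it too, via symmetries of the horizontal Hessian of $S$), so the whole content is indeed $dS\rvert_H=0$. But the plan for that step has a genuine gap. The ``crucial step'' you describe --- extracting from the $d^2=0$ integrability conditions of adapted structure equations an identity of the form $f(Scal)\cdot dS\rvert_H=0$ --- is exactly what you do not supply, and there is good reason to expect it cannot be produced at that level. For a qc-Einstein structure with possibly non-constant $S$ the available structure equations are the general ones of \cite[Proposition 3.1]{IV2} (equation \eqref{streq} in this paper), which contain $dS$ and the Ricci $2$-forms; applying $d$ to them reproduces the first and second Bianchi identities of the Biquard connection, and their contractions are precisely the identities whose horizontal coefficient degenerates at $n=1$. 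The clean structure equations $d\eta_i=2\omega_i+S\,\eta_j\wedge\eta_k$ that would make a $d^2=0$ computation decisive are only established \emph{after} constancy of $S$ is known (Theorem~\ref{str_eq_mod_th} uses Theorem~\ref{t:main} as input), so using them here would be circular. Moreover, the existence on the quaternionic Heisenberg group of qc-Einstein structures with genuinely non-constant conformal factor (the Liouville-type family \eqref{e:Liouville conf factor}) indicates that the constancy of $S$ is not a pointwise algebraic consequence of local differential identities alone, but requires a classification-type argument.

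For contrast, the paper's route on the set where $\nabla S\neq 0$ is second-order and then global in character: it first shows the horizontal Hessian of $S$ is a multiple of $g$, uses a third-order Ricci identity to compute $R(Z,X,Y,\nabla S)$, and then, exploiting that $H$ is four-dimensional when $n=1$, invokes Kulkarni's theorem on curvature structures to conclude that the full horizontal curvature has the constant-curvature form \eqref{n17}. Substituting this into the formula for the qc-conformal curvature gives $W^{qc}=0$, so by the local sphere theorem of \cite{IV1} the structure is locally qc-conformal to the flat structure on the quaternionic Heisenberg group; finally, the extension of \cite[Theorem~1.1]{IMV} classifying qc-Einstein conformal factors shows any such structure has constant qc-scalar curvature, contradicting $\nabla S\neq 0$. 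Your proposal contains none of these ingredients (neither the four-dimensional curvature argument, nor conformal flatness, nor the Liouville classification), and the mechanism you do propose ($d^2=0$ on structure equations) collapses back onto the degenerate Bianchi contractions, so as it stands the proof is not complete.
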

The proof of  Theorem~\ref{t:main} makes use of the qc-conformal curvature tensor \cite{IV1}, which characterizes  locally qc conformally flat structures, a  result of Kulkarni \cite{Kul} on algebraic properties of curvature tensors in four dimensions,  and an extension of \cite[Theorem~1.21]{IMV}  which describes explicitly all  qc-Einstein structures defined on open sets of the quaternionic Heisenberg group that are  point-wise qc-conformal to the standard flat qc structure on the quaternionic Heisenberg groups. The main application of Theorem \ref{t:main} is the removal of the a-priori assumption of constancy of the qc-scalar curvature in some previous papers concerning  seven dimensional qc-Einstein manifolds, see for example Corollaries \ref{c:vert int}, \ref{c:4-form} and \ref{c:3-sasakian}.

The remaining parts of this paper are motivated by known properties of  qc-Einstein manifolds with non-vanishing qc-scalar curvature,   in that we prove corresponding results in the case of vanishing qc-scalar curvature.  With this goal in mind and because of its independent interest, in Section~\ref{three}  we define a  connection on the canonical three dimensional vertical distribution  of a qc manifold. We show  that   qc-Einstein spaces can be characterized by the  flatness of this vertical  connection. This allows us to write the structure equations of a qc-Einstein manifold in terms of the defining 1-forms, their exterior derivatives and the qc-scalar curvature, see Theorem~\ref{str_eq_mod_th}. The latter extends the results of  \cite{IV2} and \cite[Section 4.4.2]{IV3} to the vanishing qc-scalar curvature case.

Recall that complete and regular  3-Sasakian and $nS$-spaces
(called negative 3-Sasakian here) have canonical fibering with
fiber $Sp(1)$ or $SO(3)$, and base a quaternionic K\"ahler
manifold. The shows  that if $S>0$  (resp. $S<0$), the qc Einstein
manifolds are "essentially" $SO(3)$ bundles over quaternionic
K\"ahler manifolds with positive (resp. negative) scalar
curvature. In section \ref{five} we show that in the "regular"
case, similar to the non-zero qc-scalar curvature cases, a
qc-Einstein manifold of zero scalar curvature fibers over a
hyper-K\"ahler manifold, see Proposition~\ref{t:hKquot}.

We conclude the paper with a brief section where we show that
every qc-Einstein manifold of non-zero scalar curvature carries
two Einstein metrics. Note that the corresponding results
concerning the 3-Sasakian case is  well known, see \cite{BGN}.  In
the negative qc-scalar curvature case both Einstein metrics are of
signature $(4n,3)$  of which the first  is locally (negative)
3-Sasakian, while the second "squashed' metric is not  3-Sasakian,
see Proposition~\ref{p:einst m}.

\begin{conv}\label{conven}
Throughout the paper, unless explicitly stated otherwise, we will
use the following conventions.
\begin{enumerate}[a)]\label{e:notation}
\item The triple $(i,j,k)$ denotes any cyclic permutation of
$(1,2,3)$ while  $s,t$ will denote any numbers from the set $\{1,2,3\}$, $s,t \in \{1,2,3\}$.
\item For a  decomposition $TM=V\oplus H$ we let
$[.]_V$ and $[.]_H$be  the corresponding projections to $V$ and $H$.
\item ${\A}, {\B}, {\C}$, etc. will denote
sections of the tangent bundle of $M$, i.e., ${\A},
{\B}, {\C}\in  TM$.
\item $X,Y,Z,U$ will denote horizontal vector fields, i.e.,
$X,Y,Z,U\in H$.
\item $\xi,\xi',\xi''$ will denote vertical vector fields, i.e., $\xi,\xi',\xi''\in
V$.
\item $\{e_1,\dots,e_{4n}\}$ denotes an orthonormal basis of the
horizontal space $H$;
\item The summation convention over repeated vectors from the
basis $ \{e_1,\dots,e_{4n}\}$ is used. For example,
$k=P(e_b,e_a,e_a,e_b)$ means $
k=\sum_{a,b=1}^{4n}P(e_b,e_a,e_a,e_b). $
\end{enumerate}
\end{conv}

\textbf{Acknowledgments} The research is partially supported by
the Contract ``Idei", DID 02-39/21.12.2009. S.I and I.M. are
partially supported by the Contract 156/2013 with the University
of Sofia `St.Kl.Ohridski'.

\section{Preliminaries}\label{s:prelim}
It is well known that the sphere at infinity of a  non-compact symmetric space $M$ of rank one carries a natural
Carnot-Carath\'eodory structure, see \cite{M,P}.  Quaternionic contact (qc) structure were introduced by O. Biquard \cite{Biq1} and are modeled on  the conformal boundary at infinity of the quaternionic hyperbolic space. Biquard showed that the infinite dimensional family \cite{LeB91} of complete quaternionic-K\"ahler deformations of the quaternion hyperbolic metric have conformal infinities which provide an infinite dimensional family of examples of qc structures. Conversely, according to \cite{Biq1} every real analytic qc structure on a manifold $M$ of dimension at least eleven is the conformal infinity of a unique quaternionic-K\"ahler metric defined in a neighborhood of $M$. Furthermore,  \cite{Biq1} considered CR and qc structures as boundaries of infinity of Einstein metrics rather than only as boundaries at infinity of  K\"ahler-Einstein and quaternionic-K\"ahler metrics, respectively.  In fact, \cite{Biq1} showed that in each of the three hyperbolic cases (complex, quaternionic, octoninoic)  any small perturbation of the standard Carnot-Carath\'eodory structure on the boundary is the conformal infinity of an essentially unique Einstein metric on the unit ball, which is asymptotically symmetric.

 We refer to \cite{Biq1}, \cite{IMV} and \cite{IV3} for a more detailed exposition of the definitions and properties of qc structures and the associated Biquard connection. Here, we recall briefly the relevant facts needed for this paper. A quaternionic contact (qc) manifold is a $4n+3$%
-dimensional manifold  $M$ with a codimension three distribution $H$  equipped with  an $Sp(n)Sp(1)$ structure locally defined by an $\mathbb{R}^3$-valued 1-form $\eta=(\eta_1,\eta_2,\eta_3)$. Thus, $H=\cap_{s=1}^3 Ker\, \eta_s$
is equipped with a positive definite symmetric tensor $g$, called the horizontal metric, and a compatible rank-three bundle $\mathbb{Q}$
consisting of endomorphisms of $H$ locally generated by three orthogonal almost complex
structures $I_s$, $s=1,2,3$, satisfying the unit quaternion relations: (i) $I_1I_2=-I_2I_1=I_3, \quad $ $I_1I_2I_3=-id_{|_H}$; \hskip.1in (ii) $g(I_s.,I_s.)=g(.,.)$; and \hskip.1in  (iii) the
compatibility conditions  $2g(I_sX,Y)\ =\ d\eta_s(X,Y)$, $
X,Y\in H$  hold true.

The transformations preserving a given quaternionic contact
structure $\eta$, i.e., $\bar\eta=\mu\Psi\eta$ for a positive smooth
function $\mu$ and an $SO(3)$ matrix $\Psi$ with smooth functions as
entries are called \emph{quaternionic contact conformal (qc-conformal) transformations}.  If the function $\mu$ is constant $\bar\eta$ is called \emph{qc-homothetic} to $\eta$ and in the case $\mu\equiv 1$ we call $\bar\eta$ \emph{qc-equivalent} to $\eta$. Notice that in the latter case, $\eta$ and $\tilde\eta$ define the same qc structure. The qc conformal curvature tensor $W^{qc}$, introduced in \cite{IV1}, is the
obstruction for a qc structure to be locally qc conformal to the
standard 3-Sasakian structure on the $(4n+3)$-dimensional sphere \cite{IV1,IV3}.

Biquard showed that on a qc manifold of dimension at least eleven
there  is a unique connection $\nabla$ with torsion $T$  and a
unique supplementary to $H$ in $TM$  subspace $V$, called the\emph{ vertical space}, such that the following conditions are satisfied:
%\begin{enumerate}[(i)] \item
(i) $\nabla$ preserves the decomposition $H\oplus V$ and the $
Sp(n)Sp(1)$ structure on $H$, i.e.,  $\nabla g=0, \nabla\sigma \in\Gamma(
\mathbb{Q})$ for a section $\sigma\in\Gamma(\mathbb{Q})$, and its torsion on
$H$ is given by $T(X,Y)=-[X,Y]_{|V}$; \quad
%\item
(ii) for $\xi\in V$, the endomorphism $T_{\xi }=T(\xi ,\cdot ):H\rightarrow H$ of $H$ lies in $
(sp(n)\oplus sp(1))^{\bot}\subset gl(4n)$; \quad
%\item
(iii) the connection on $V$ is induced by the natural identification $
\varphi$ of $V$ with the subspace $sp(1)$ of the endomorphisms of $H$, i.e., $
\nabla\varphi=0$.
%\end{enumerate}
%\end{thrm}
Furthermore, \cite{Biq1} also described the supplementary distribution $V$,  which is (locally) generated by the so called Reeb vector fields $%
\{\xi_1,\xi_2,\xi_3\}$ determined by
\begin{equation}  \label{bi1}
 \eta_s(\xi_t)=\delta_{st}, \qquad (\xi_s\lrcorner
d\eta_s)_{|H}=0,\quad (\xi_s\lrcorner d\eta_t)_{|H}=-(\xi_t\lrcorner
d\eta_s)_{|H},
\end{equation}
where $\lrcorner$ denotes the interior multiplication.

If the dimension of $M $ is seven Duchemin showed in \cite{D} that if we assume, in addition, the
existence of Reeb vector fields as above, then   the Biquard result  holds. Henceforth, by a qc structure in dimension $7$ we shall mean a qc structure satisfying \eqref{bi1}.  We shall call $\nabla$ \emph{the Biquard connection}.

Notice that equations \eqref{bi1} are invariant under the natural $SO(3)$
action. Using the triple of Reeb vector fields we extend the horizontal  metric $g$  to a metric $h$ on $M$ by requiring $span\{\xi_1,\xi_2,\xi_3\}=V\perp H
\text{ and } h(\xi_s,\xi_t)=\delta_{st} $,
\begin{equation}\label{e:Riem-metric}
h|_H=g, \qquad h|_V= \eta_1\otimes\eta_1+ \eta_2\otimes\eta_2 + \eta_3\otimes\eta_3,\qquad h(\xi_s,X)=0.
\end{equation}
The Riemannian metric $h$ as well as the Biquard connection do not depend on the action of $SO(3)$ on $V$, but both change  if $\eta$ is multiplied by a conformal factor \cite{IMV}.

 The fundamental 2-forms $\omega_s$ and the { fundamental 4-form} $\Omega$  of the quaternionic structure $\mathbb{Q}$ are
defined, respectively, by
%\begin{equation}  \label{thirteen}
\[
2\omega_{s|H}\ =\ \, d\eta_{s|H},\quad \xi\lrcorner\omega_s=0,\qquad
\Omega=\omega_1\wedge\omega_1+\omega_2\wedge\omega_2+\omega_3\wedge\omega_3.
\]
%\end{equation}

\subsection{The torsion of the Biquard connection}
It was shown in \cite{Biq1} that the
torsion $T_{\xi }$ is completely trace-free, $tr\,T_{\xi }=tr\,T_{\xi }\circ
I_{s}=0$.  Decomposing the endomorphism $T_{\xi }\in (sp(n)+sp(1))^{\perp }$
into its symmetric part $T_{\xi }^{0}$ and skew-symmetric part $b_{\xi
},T_{\xi }=T_{\xi }^{0}+b_{\xi }$, we have  $T_{\xi
_{i}}^{0}I_{i}=-I_{i}T_{\xi _{i}}^{0}\quad I_{2}(T_{\xi
_{2}}^{0})^{+--}=I_{1}(T_{\xi _{1}}^{0})^{-+-},\quad I_{3}(T_{\xi
_{3}}^{0})^{-+-}=I_{2}(T_{\xi _{2}}^{0})^{--+},\quad I_{1}(T_{\xi
_{1}}^{0})^{--+}=I_{3}(T_{\xi _{3}}^{0})^{+--}$, where the upper script $+++$
denotes the component commuting with all three $I_{i}$, $+--$ indicates the component  commuting with $%
I_{1} $ and anti-commuting with the other two, etc. Furthermore, the symmetric part
$T_\xi^0$  satisfies the identity
\begin{equation}\label{to}
g(T_\xi^0(X),Y)=\frac{1}{2}\mathcal{L}_{\xi}g(X,Y),\qquad \xi\in
V,\quad X,Y\in H,
\end{equation}
where $\mathcal{L}_{\xi}$ denotes the Lie derivative with respect to $\xi$. The skew-symmetric
part can be represented as $b_{\xi _{i}}=I_{i}u$, where $u$ is a traceless
symmetric (1,1)-tensor on $H$ which commutes with $I_{1},I_{2},I_{3}$.
Therefore we have $T_{\xi _{i}}=T_{\xi _{i}}^{0}+I_{i}u$. If $n=1$ then the
tensor $u$ vanishes identically, $u=0$, and the torsion is a symmetric
tensor, $T_{\xi }=T_{\xi }^{0}$.
Following \cite{IMV} we define the $Sp(n)Sp(1)$ components  $T^0$ and $U$ of the torsion tensor by
\begin{gather*}% \label{tor}
T^0(X,Y)\ {=}\ g((T_{\xi_1}^{0}I_1+T_{\xi_2}^{0}I_2+T_{%
\xi_3}^{0}I_3)X,Y),\qquad
U(X,Y)\ {=}\ -g(uX,Y).
\end{gather*}
Then, as shown in \cite{IMV}, both $T^0$ and $U$ are trace-free,
symmetric and invariant under qc homothetic transformations.
Using the fixed horizontal metric $g$, we shall also denote by $T^0$ and $U$ the corresponding
endomorphisms of $H$, $g(T^0(X),Y)=T^0(X,Y)$ and $g(U(X),Y)=U(X,Y)$.  The torsion of the Biquard connection $\nabla$
is described by the formulas \cite{Biq1} and \cite{IMV}
\begin{equation}\label{torsion}
\begin{aligned}
& T(X,Y)  = -[X,Y]_V=2\sum_{s=1}^3\omega_s(X,Y)\xi_s,\qquad
 T(\xi_s,X) = \frac{1}{4}(I_sT^0-T^0I_s)(X)+I_sU(X),\\
& \hskip 1in T(\xi_i,\xi_j) = -S\xi_k-[\xi_i,\xi_j]_H,
\end{aligned}
\end{equation}
where $[\xi_i,\xi_j]_H$ stands for the $H$-component of the
commutator of the vector fields $\xi_i$, $\xi_j$ and $S$ is the normalized qc-scalar curvature defined below.

\subsection{The curvature of the Biquard connection}
We denote by
$R=[\nabla,\nabla]-\nabla_{[,]}$ the curvature tensor of $\nabla$ and by the same letter
$R$ the curvature $(0,4)$-tensor
$R(\A,\B,\C,\D):=h(R_{\A,\B}\C,\D).$ The \emph{qc-Ricci tensor},  the \emph{qc-scalar
curvature}, and the three \emph{qc-Ricci 2-forms} are defined as follows,
\begin{equation}\label{neww}
Ric(\A,\B)=R(e_a,\A,\B,e_a),\quad Scal=Ric(e_a,e_a),\quad
{\rho_s(\A,\B)=\frac{1}{4n}R(\A,\B,e_a,I_se_a)}.
\end{equation}
The \emph{normalized qc-scalar curvature} $S$ is defined by  $8n(n+2)S=Scal$.

A fundamental fact, \cite[Theorem 3.12]{IMV}, is that
the torsion endomorphism determines the (horizontal) qc-Ricci tensor
and the (horizontal) qc-Ricci forms of the Biquard connection,
\begin{equation}\label{sixtyfour}
\begin{aligned}
& Ric(X,Y) \ =\ (2n+2)T^0(X,Y) +(4n+10)U(X,Y)+2(n+2)Sg(X,Y)\\
 &\rho_s(X,I_sY) \  =\
-\frac12\Bigl[T^0(X,Y)+T^0(I_sX,I_sY)\Bigr]-2U(X,Y)-%
Sg(X,Y).
\end{aligned}
\end{equation}
We say that $M$ is a \emph{qc-Einstein manifold} if the horizontal Ricci tensor is proportional to the horizontal metric  $g$,
$$Ric(X,Y)=\frac{Scal}{4n}g(X,Y)=2(n+2)Sg(X,Y),$$
which taking into account  \eqref{sixtyfour}  is equivalent to $T^0=U=0$. Furthermore, by \cite[Theorem 4.9]{IMV} if $\dim(M)>7$ then any qc-Einstein structure has a constant qc-scalar curvature. It was left as an open question whether a qc-Einsten manifold of dimension seven has constant qc-scalar curvature. The main result of the current paper Theorem \ref{t:main} shows that this is indeed the case.

If the covariant derivatives with respect to $\nabla$ of the endomorphisms $I_s$, the fundamental 2-forms $\omega_s$, and the Reeb vector fields $\xi_s$  are given by
\begin{equation}\label{der}
\nabla I_i=-\alpha_j\otimes I_k+\alpha_k\otimes I_j,\qquad
\nabla\omega_i=-\alpha_j\otimes\omega_k+\alpha_k\otimes\omega_j,\qquad
%\end{equation}
%\begin{equation*}
\nabla\xi_i=-\alpha_j\otimes\xi_k+\alpha_k\otimes\xi_j,
\end{equation}
where $\alpha_1,\alpha_2, \alpha_3$ are the local connection 1-forms, then
\cite{Biq1} proved that $\alpha_i(X)=d\eta_k(\xi_j,X)=-d\eta_j(\xi_k,X)\quad \text{for all}\quad X\in H$. {On the other hand, as shown in \cite{IMV} the vertical and the $\spl$
parts of the curvature endomorphism $R(\A,\B)$ are related to the $\spl$-connection 1-forms
$\alpha_s$ by }
\begin{equation}  \label{sp1curv}
R(\A,\B,\xi_i,\xi_j)=2\rho_k(\A,\B)=(d\alpha_k+\alpha_i\wedge\alpha_j)(\A,\B).
\end{equation}
Finally, we  have the following commutation relations \cite{IMV}
\begin{equation}\label{rrho}
R(B,C,I_iX,Y)+R(B,C,X,I_iY)=2\Big[-\rho_j(B,C)\omega_k(X,Y)+\rho_k(B,C)\omega_j(X,Y)\Big].
\end{equation}

In the next section we give the proof of our main result.

\section{Proof of Theorem~\ref{t:main}}

The proof of  Theorem~\ref{t:main} is achieved with the help of the following  Lemma \ref{lemman} where we calculate the curvature $R(Z,X,Y,V)$ at points where the horizontal gradient of the qc-scalar curvature does not vanish,  $\nabla S\not=0$.  The proof of Theorem~\ref{t:main} proceeds by showing  that on any open set where $S$ is not locally constant $M$ is locally qc-conformally flat. In fact, on any open set where $\nabla S\not=0$ the qc-conformal curvature $W^{qc}$ defined in \cite{IV1} will be seen to vanish, hence by \cite[Theorem~1.2]{IV1}  the qc manifold is locally qc-conformally flat. The final step involves a  generalization of \cite [Theorem~1.1]{IMV}, which follows from the proof of  \cite [Theorem~1.1]{IMV}, allowing the explicit description of all  qc-Einstein structures defined on open sets of the quaternionic Heisenberg group that are  point-wise qc-conformal to the standard flat qc structure on the quaternionic Heisenberg groups. It turns out that all such qc structures are of constant qc-scalar curvature, which allows the completion of the proof of Theorem \ref{t:main}.

\begin{lemma}\label{lemman}
On a seven dimensional qc-Einstein  manifold we have the following formula for the horizontal curvature of the Biquard connection on any open set where the qc-scalar curvature is not constant,
\begin{equation}\label{n17}
R(Z,X,Y,V)=2S\Big[g(Z,V)g(X,Y)-g(X,V)g(Z,Y)\Big].
\end{equation}
\end{lemma}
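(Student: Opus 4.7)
The plan is to reduce the horizontal curvature to the candidate tensor $R_{0}(Z,X,Y,V):=2S[g(Z,V)g(X,Y)-g(X,V)g(Z,Y)]$ in two stages: an algebraic reduction using $\dim H=4$, followed by a differential Bianchi argument using the hypothesis $\nabla S\neq 0$. Since $M$ is qc-Einstein with $n=1$, the torsion formulas \eqref{torsion} collapse to $T(\xi_s,X)=0$ and $T(\xi_i,\xi_j)=-S\xi_k-[\xi_i,\xi_j]_H$, while \eqref{sixtyfour} yields $Ric(X,Y)=6Sg(X,Y)$. Substituting $Y\to -I_sY$ in $\rho_s(X,I_sY)=-Sg(X,Y)$ gives $\rho_s(X,Y)=-S\omega_s(X,Y)$ on $H\times H$. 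A direct verification shows the candidate $R_0$ satisfies both $Ric_0=6Sg$ and $\rho_{0,s}=-S\omega_s$, so it suffices to prove that $\widetilde{R}:=R-R_0$ vanishes on $H$.

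For the algebraic reduction I would view $\widetilde R$ as a self-adjoint operator on $\Lambda^2 H$, using the $Sp(1)Sp(1)$-invariant decomposition $\Lambda^2 H=\Lambda^2_+\oplus\Lambda^2_-$ with $\Lambda^2_+=\mathrm{span}(\omega_1,\omega_2,\omega_3)$ (for the orientation making the $\omega_s$ self-dual). Since $\widetilde\rho_s=0$ for every $s$, the operator $\widetilde R$ annihilates $\Lambda^2_+$; by self-adjointness this kills the $(+,+)$, $(+,-)$, and $(-,+)$ blocks of $\widetilde R$. The condition $\widetilde{Ric}=0$ then reduces to the single scalar relation $\mathrm{tr}(\widetilde R^{--})=0$, so $\widetilde R$ is determined by a traceless symmetric endomorphism $W_-$ of the three-dimensional space $\Lambda^2_- H$, i.e., an anti-self-dual Weyl-type tensor. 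The commutation relation \eqref{rrho}, specialized to horizontal $B,C$ and $\rho_s=-S\omega_s$, is automatically verified by $R_0$ and places no further algebraic constraint on $W_-$, since the right-hand side of \eqref{rrho} lives in $\Lambda^2_+\otimes\Lambda^2_+$ and pairs trivially with the piece of $R$ living on $\Lambda^2_-$.

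To kill $W_-$ I would invoke the second Bianchi identity of the Biquard connection, whose torsion corrections are now simplified by $T^0=U=0$. Differentiating $Ric=6Sg$ gives $(\nabla_A Ric)(X,Y)=6(AS)g(X,Y)$, and differentiating $\rho_s=-S\omega_s$ using \eqref{der} produces an analogous expression involving the connection 1-forms $\alpha_s$ and the derivatives of $S$. Contracting the differential Bianchi identity with these torsion corrections and subtracting the corresponding identity for $R_0$ yields a schematic relation of the form $(\mathrm{div}\,W_-)(X,Y,Z)=F(\nabla S,X,Y,Z)$. I would then apply Kulkarni's algebraic result on curvature tensors in four dimensions, which restricts traceless symmetric operators on $\Lambda^2_- H$ compatible with such a divergence constraint along a fixed horizontal direction; the hypothesis $\nabla S|_H\neq 0$ singles out such a direction, forcing $W_-=0$ and hence $R=R_0$ on $H$, which is \eqref{n17}. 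The main obstacle will be the careful bookkeeping in the contracted Bianchi identity: the vertical torsion term $T(\xi_i,\xi_j)=-S\xi_k-[\xi_i,\xi_j]_H$ and the connection 1-forms from \eqref{der} inject auxiliary vertical-derivative terms of $S$ that must be organized so as to cancel, leaving only the horizontal gradient coupled to $W_-$ in a form matching the hypothesis of Kulkarni's lemma.
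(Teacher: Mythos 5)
Your algebraic reduction is essentially sound, with one unstated prerequisite: to treat $\widetilde R=R-R_0$ as a self-adjoint operator on $\Lambda^2H$ you need the pair symmetry $R(X,Y,Z,V)=R(Z,V,X,Y)$, which is not automatic for the Biquard connection; it must first be deduced from the first Bianchi identity, using that in the qc-Einstein case $(\nabla T)(X,Y)=0$ and $T(T(X,Y),Z)=0$, exactly as in \eqref{b1}--\eqref{riem}. Granting that, the reduction of $\widetilde R$ to a traceless symmetric endomorphism $W_-$ of $\Lambda^2_-H$ is correct and is morally the same as the paper's use of ``vanishing Weyl plus $Ric=6Sg$''.

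The genuine gap is in your second stage. A contracted second Bianchi identity can only yield a first-order, divergence-type constraint on $W_-$, schematically $\operatorname{div}W_-=F(\nabla S)$, and such a constraint never forces $W_-$ to vanish pointwise (harmonic-Weyl-type conditions do not imply $W_-=0$); no algebraic lemma can convert a divergence identity along one distinguished direction into pointwise vanishing. You have also mischaracterized the cited result: \cite[Theorem 3]{Kul} is purely algebraic --- it asserts that an algebraic curvature tensor in dimension four whose sectional curvature satisfies identities such as $K(Z,X)+K(Y,V)=K(Z,V)+K(Y,X)$ for orthonormal quadruples has vanishing conformal part --- it involves no divergence hypothesis and no distinguished direction, so it cannot play the role you assign to it. What is missing is the analytic core of the paper's proof: from the mixed curvature formulas \eqref{vert2} and \eqref{d3n6} with $T^0=U=0$, $n=1$, one obtains $\rho_k(I_iY,\xi_i)=-\tfrac16 dS(I_kY)$, hence $dS(\xi_s)=0$, the symmetry and $Sp(1)$-invariance of the horizontal Hessian and therefore $\nabla^2S(X,Y)=-\tfrac{\triangle S}{4}g(X,Y)$, and then a third-order Ricci identity traced against $Ric=6Sg$ gives $\nabla^3S(Z,e_a,e_a)=-8S\,dS(Z)$ and the explicit identity \eqref{n15}, $R(Z,X,Y,\nabla S)=2S\big[dS(Z)g(X,Y)-dS(X)g(Z,Y)\big]$. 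It is this pointwise evaluation of the curvature contracted with $\nabla S$, computed in the orthonormal frame $\{\nabla S,I_1\nabla S,I_2\nabla S,I_3\nabla S\}$ of $H$ together with \eqref{rrho}, that verifies Kulkarni's sectional-curvature hypotheses and kills the Weyl part (your $W_-$). Nothing in your sketch produces this input, so as written the argument cannot close.
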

\begin{proof}[Proof of Lemma \ref{lemman}]
Our first goal is to show the next identity,
\begin{equation}\label{n15}
R(Z,X,Y,\nabla S)=2S\Big[dS(Z)g(X,Y)-dS(X)g(Z,Y)\Big],
\end{equation}
where $\nabla S$ is the horizontal gradient of $S$ defined by $g(X,\nabla S)=dS(X)$.  For this, recall the general formula proven in \cite[Theorem 3.1, (3.6)]{IV1},
\begin{multline}\label{vert2}
 R(\xi_i,\xi_j,X,Y)=(\nabla_{\xi_i}U)(I_jX,Y)-(\nabla_{\xi_j}U)(I_iX,Y)\\
 -\frac14\Big[(\nabla_{\xi_i}T^0)(I_jX,Y)+(\nabla_{\xi_i}T^0)(X,I_jY)\Big]
 +\frac14\Big[(\nabla_{\xi_j}T^0)(I_iX,Y)+(\nabla_{\xi_j}T^0)(X,I_iY)\Big]\\
 -(\nabla_X\rho_k)(I_iY,\xi_i) -\frac{Scal}{8n(n+2)}T(\xi_k,X,Y)
 -T(\xi_j,X,e_a)T(\xi_i,e_a,Y)+T(\xi_j,e_a,Y)T(\xi_i,X,e_a)
\end{multline}
where the Ricci two forms are given by, cf. \cite[Theorem 3.1]{IV1},
\begin{equation}
\begin{aligned}
&  6(2n+1)\rho_s(\xi_s,X)=(2n+1)X(S)+\frac12\Big[(\nabla_{e_a}T^0)(e_a,X)-3(\nabla_{e_a}T^0)(I_se_a,I_sX)\Big]
-2(\nabla_{e_a}U)(e_a,X),\\
&  6(2n+1)\rho_i(\xi_j,I_kX)=-6(2n+1)\rho_i(\xi_k,I_jX)=(2n-1)(2n+1)X(S)\hfill\\
& \hskip1in -\frac12\Big [ (4n+1)(\nabla_{e_a}T^0)(e_a,X)+3(\nabla_{e_a}T^0)(I_ie_a,I_iX)\Big]-4(n+1)(%
\nabla_{e_a}U)(e_a,X) .
\hfill\end{aligned}  \label{d3n6}
\end{equation}
In our case  $T^0=U=0$, hence  \eqref{vert2} takes the form
\begin{equation}\label{n2}
R(\xi_i,\xi_j,X,Y)=-(\nabla_X\rho_k)(I_iY,\xi_i).
\end{equation}
Letting $n=1$ and $T^0=U=0$ in \eqref{d3n6} it follows $\rho_i(I_kY,\xi_j)=-\frac16dS(Y)$, which after a cyclic permutation of $ijk$ and a substitution of  $Y$ with $I_kY$ yields
\begin{equation}\label{n1}
\rho_k(I_iY,\xi_i)=-\frac16dS(I_kY).
\end{equation}
Taking the covariant derivative of \eqref{n1} with respect to the Biquard connection and applying \eqref{der} we calculate
\begin{multline}\label{n3}
(\nabla_X\rho_k)(I_iY,\xi_i)-\alpha_i(X)\rho_j(I_iY,\xi_i)+\alpha_j(X)\rho_i(I_iY,\xi_i)-\alpha_j(X)\rho_k(I_kY,\xi_i)+\alpha_k(X)\rho_k(I_jY,\xi_i)\\-\alpha_j(X)\rho_k(I_iY,\xi_k)+\alpha_k(X)\rho_k(I_iY,\xi_j)=-\frac16\nabla^2S(X,I_kY)+\frac16\alpha_i(X)dS(I_jY)-\frac16\alpha_j(X)dS(I_iY).
\end{multline}
Applying \eqref{d3n6} with $n=1$ and $T^0=U=0$ we see that the terms involving the connection 1-forms cancel and \eqref{n3} turns into
\begin{equation}\label{n4}
(\nabla_X\rho_k)(I_iY,\xi_i)=-\frac16\nabla^2S(X,I_kY).
\end{equation}
A substitution of \eqref{n4} in \eqref{n2} taking into account the skew-symmetry of $R(\xi_i,\xi_j,X,Y)$ with respect to $X$ and $Y$ allows us to conclude the following identity for the horizontal Hession of $S$
\begin{equation}\label{n5}
\nabla^2S(X,I_sY)+\nabla^2S(Y,I_sX)=0.
\end{equation}
The trace of \eqref{n5} together with the Ricci identity yield
\begin{multline*}%\label{n6}
0=2\nabla^2S(e_a,I_ke_a)=\nabla^2S(e_a,I_ke_a)-\nabla^2S(I_ke_a,e_a)
=-2\sum_{s=1}^3\omega_s(e_a,I_ke_a)dS(\xi_s)=-8dS(\xi_k),
\end{multline*}
i.e., we have
\begin{equation}\label{n6}
dS(\xi_s)=0, \qquad \nabla^2S(\xi_s,\xi_t)=0.
\end{equation}
The equality \eqref{n6} shows that $S$ is constant along the vertical directions, $dS(\xi_s)=0$, hence, in view of \eqref{der}, the second equation of \eqref{n6} holds as well. In addition, we have
$\nabla^2S(X,\xi_s)=XdS(\xi_s)-dS(\nabla_X\xi_s)=0$ since $\nabla$ preserves the vertical directions due to \eqref{der}.
Moreover, the Ricci identity $$\nabla^2S(\xi_s,X)-\nabla^2S(X,\xi_s)=dS(T(\xi_s,X))=0$$ together with the above equality leads to
\begin{equation}\label{n7}
\nabla^2S(\xi_s,X)=\nabla^2S(X,\xi_s)=0.
\end{equation}

Next, we show that the horizontal Hessian of $S$ is symmetric. Indeed, we have the identity
\begin{equation}\label{n8}
\nabla^2S(X,Y)-\nabla^2S(Y,X)=d^2S(X,Y)-dS(T(X,Y)) =-2\sum_{s=1}^3\omega_s(X,Y)dS(\xi_s)=0
\end{equation}
where we applied \eqref{n6} to conclude the last equality.  Now, \eqref{n5} and \eqref{n8} imply
\begin{equation}\label{n9a}
\nabla^2S(X,Y)-\nabla^2S(I_sX,I_sY)=0
\end{equation}
which shows that the $[-1]$-component of the horizontal Hessian vanishes. Hence, the horizontal Hessian of $S$ is proportional to the horizontal metric since $n=1$, i.e.,
\begin{equation}\label{n9}
\nabla^2S(X,Y)=\frac{\nabla^2S(e_a,e_a)}4g(X,Y)=-\frac{\triangle S}4g(X,Y),
\end{equation}
where $\triangle S=-\nabla^2S(e_a,e_a)$ is the sub-Laplacian of $S$.  We have the following Ricci identity of order three (see e.g. \cite{IPV}
\begin{equation}\label{n10}
\nabla^3 S(X,Y,Z)-\nabla^3 S(Y,X,Z)=-R(X,Y,Z,\nabla S) - 2\sum_{s=1}^3
\omega_s(X,Y)\nabla^2S (\xi_s,Z).
\end{equation}
Applying \eqref{n7} we conclude from \eqref{n10} that
\begin{equation}\label{n11}
\nabla^3 S(X,Y,Z)-\nabla^3 S(Y,X,Z)=-R(X,Y,Z,\nabla S).
\end{equation}
Combining \eqref{n11} and \eqref{n9} we obtain  the next expression for  the curvature
\begin{equation}\label{n12}
R(Z,X,Y,\nabla S)=\frac{\nabla^3S(X,e_a,e_a)}4g(Z,Y)-\frac{\nabla^3S(Z,e_a,e_a)}4g(X,Y).
\end{equation}
The trace of \eqref{n12} together with the first equality of \eqref{sixtyfour} computed for $n=1, T^0=0$ and $U=0$ yield
$$Ric(Z,\nabla S)=6SdS(Z)=-\frac34\nabla^3S(Z,e_a,e_a).$$
Thus, we have
\begin{equation}\label{n14}
\nabla^3S(Z,e_a,e_a)=-8SdS(Z).
\end{equation}
Now, a substitution  of \eqref{n14} in \eqref{n12} gives \eqref{n15}.

Turning to the general formula \eqref{n17} we note that the horizontal curvature of the Biquard connection in the  qc-Einstein case
satisfies the identity
\begin{equation}\label{b1}R(X,Y,Z,V)+R(Y,Z,X,V)+R(Z,X,Y,V)=0.
\end{equation}
This follows from the first Bianchi identity since $(\nabla T)(X,Y)=0$ and $ T(T(X,Y),Z)=\sum_{s=1}^32\omega_s(X,Y)T(\xi_s,Z)=0.$
Thus,  the horizontal curvature has the algebraic properties of  the Riemannian curvature, namely it is skew-symmetric with respect to the first and the last pairs and satisfies the Bianchi identity \eqref{b1}. Therefore it also has the fourth Riemannian curvature property,
\begin{equation}\label{riem}
R(X,Y,Z,V)=R(Z,V,X,Y).
\end{equation}
The equalities \eqref{n15} and \eqref{riem} imply
\begin{gather}\label{n16}
0=R(I_i\nabla S,I_j\nabla S,I_k\nabla S,\nabla S)=R(I_k\nabla S,\nabla S,I_i\nabla S,I_j\nabla S), \\\nonumber 0= R(I_i\nabla S,I_j\nabla S,I_j\nabla S,\nabla S)= R(I_j\nabla S,\nabla S,I_i\nabla S,I_j\nabla S).
\end{gather}
Moreover, using \eqref{rrho} and the second equality in \eqref{sixtyfour} with $T^0=U=0$  we calculate
\begin{multline}\label{scur1}
R(I_j\nabla S,I_i\nabla S,I_i\nabla S,I_k\nabla S)-R(I_j\nabla S,I_i\nabla S,\nabla S,I_j\nabla S)\\=-2\rho_j(I_j\nabla S,I_i\nabla S)\omega_k(\nabla S,I_k\nabla S)+2\rho_k(I_j\nabla S,I_i\nabla S)\omega_j(\nabla S,I_k\nabla S)=0
\end{multline}
The second equality of \eqref{n16} together with \eqref{scur1} yields
\begin{equation}\label{scur2}
R(I_j\nabla S,I_i\nabla S,I_i\nabla S,I_k\nabla S)=0.
\end{equation}
Finally, \eqref{n15}, \eqref{n16}, \eqref{scur1}, \eqref{scur2} together with \eqref{rrho} and \eqref{sixtyfour}  imply for any $s\not=t$ the identities
\begin{equation}\label{scur3}
R(I_s\nabla S,I_t\nabla S,I_t\nabla S,I_s\nabla S)=R(I_s\nabla S,\nabla S,\nabla S,I_s\nabla S)=2S|\nabla S|^4.
\end{equation}
In a neighborhood of any point where $\nabla S\not=0$ the quadruple $\{\frac{\nabla S}{|\nabla S|}, \frac{I_1\nabla S}{|\nabla S|},\frac{I_2\nabla S}{|\nabla S|},\frac{I_3\nabla S}{|\nabla S|}\}$ is an orthonormal  basis of $H$, hence
%From \eqref{n16} it follows that the horizontal curvature vanishes on this basis of $H$.
 after a small calculation taking into account \eqref{n16}, \eqref{scur2} and \eqref{scur3}, we see that for any orthonormal basis  $\{Z,X,Y,V\}$ of $H$ we have
 \begin{equation}\label{kul1}
 R(Z,X,Y,V)=0, \qquad R(Z,X,Z,V)-R(Y,X,Y,V)=0,
 \end{equation}
where the second equation follows from the first using the orthogonal basis $\{Z+Y,X,Z-Y,V\}$.
For  the "sectional curvature" $K(Z,X)=R(Z,X,Z,X)$ we have then the identities
\begin{multline*}
K(Z,X)+K(Y,V)-K(Z,V)-K(Y,X)=R(Z,X,Z,X)+R(Y,V,Y,V) -R(Z,V,Z,V) - R(Y,X,Y,X)\\
=R(Y,X,Y,X)+R(Y,X,Y,V)-R(Y,V,Y,X)+R(Y,V,Y,V)-R(Z,X,Z,X)-R(Z,X,Z,V)\\
+R(Z,V,Z,X)-R(Z,V,Z,V)=R(Z,X+V,Z,X-V)-R(Y,X+V,Y,X-V)=0
\end{multline*}
using \eqref{riem} in the  second equality and \eqref{kul1} in the last equality. Now, \cite[Theorem 3]{Kul}, shows that the Riemannian conformal tensor of the horizontal curvature $R$ vanishes. In view of  $Ric=6S\cdot g$, we conclude that the curvature restricted to the horizontal space  is given by \eqref{n17} which proves the lemma.
\end{proof}

\begin{proof}[Proof of Theorem~\ref{t:main}]
Let $M$ be a qc-Einstein  manifold of dimension seven with a local $\mathbb{R}^3$-valued 1-form $\eta=(\eta_1,\eta_2,\eta_3)$ defining the given qc structure. Suppose the qc-scalar curvature is not a locally constant function. We shall reach a contradiction by showing that $M$ is locally qc conformally flat, which will be shown to imply that the qc-scalar curvature is locally constant.

To prove the first claim we prove  that if the qc-scalar curvature is not locally constant  then the qc-conformal curvature $W^{qc}$ of \cite{IV1} vanishes on the open set where $\nabla S\not=0$. For this we recall the formula for the qc-conformal curvature $W^{qc}$ given in \cite[Prposition~4.2]{IV1} which with the assumptions  $T^0=U=0$ simplifies to
\begin{multline}\label{qc1}
W^{qc}(Z,X,Y,V)=\frac14\Big[R(Z,X,Y,V)+\sum_{s=1}^3R(I_sZ,I_sX,Y,V)\Big]\\
+\frac{S}2\Big[g(Z,Y)g(X,V)-g(Z,V)g(X,Y)+\sum_{s=1}^3\Big(\omega_s(Z,Y)\omega_s(X,V)-
\omega_s(Z,V)\omega_s(X,Y)\Big)\Big].
\end{multline}
A substitution of \eqref{n17} in \eqref{qc1} shows $W^{qc}=0$ on $\nabla S\not=0$.

Now, \cite[Theorem~1.2]{IV1}  shows that the open set $\nabla S\not=0$ is locally qc-conformaly flat, i.e., every point $p$, $\nabla S(p)\not=0$ has an open neighborhood $O$ and a qc-conformal transformation $F: O \rightarrow \QH$ to the   quaternionic Heisenberg group $\QH$ equipped with the standard flat qc structure $\tilde\Theta$. Thus, $\Theta\overset{def}{=}{F}^*\eta= \frac{1}{2\mu} \tilde\Theta$ for some positive smooth function $\mu$ defined on the open set $F(O)$.  By its definition $\Theta$ is a qc-Einstein structure,  hence the proof of \cite[Theorem 1.1]{IMV} shows that,  with a small change of the parameters in \cite[Theorem 1.1]{IMV}, $\mu$ is given by
\begin{equation}\label{e:Liouville conf factor}
\mu (q,\omega) \ =\ c_0\ \Big [  \big ( \sigma\ +\
 |q+q_0|^2 \big )^2\  +\  |\omega\ +\ \omega_o\ +
\ 2\ \text {Im}\  q_o\, \bar q|^2 \Big ],
\end{equation}
for some fixed $(q_o,\omega_o)\in \QH$ and constants $c_0>0$ and $\sigma\in \mathbb{R}$. A small calculation using \eqref{e:Liouville conf factor} and the Yamabe equation  \cite[(5.8)]{IMV} shows $Scal_{\Theta}=128n(n+2)c_0\sigma =const$. Since $\eta$ is qc-conformal  to $\Theta$ via the map $F$,  it follows that  $Scal_{\eta}=const$ on $O$, which is a contradiction.
\end{proof}

An immediate consequence of Theorem \ref{t:main} and \cite[Theorem 4.9]{IMV} is the next
\begin{cor}\label{c:vert int}
The vertical space $V$ of a seven dimensional qc-Einstein manifold is integrable.
\end{cor}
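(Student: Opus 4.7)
The statement asserts integrability of $V$, equivalently $[\xi_i,\xi_j]_H = 0$ for every cyclic permutation $(i,j,k)$ of $(1,2,3)$. By the torsion identity in \eqref{torsion}, $T(\xi_i,\xi_j) = -S\xi_k - [\xi_i,\xi_j]_H$, so the task reduces to controlling the horizontal component of $T(\xi_i,\xi_j)$.

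The plan is the two-step argument announced in the statement of the corollary: combine Theorem~\ref{t:main} with the dimension-independent portion of the proof of \cite[Theorem~4.9]{IMV}. In the qc-Einstein setting $T^0 = U = 0$, equation \eqref{d3n6} immediately gives that the mixed Ricci two-forms $\rho_s(\xi_t,X)$ with $X \in H$ are proportional to $dS(X)$; feeding this into the contracted second Bianchi identity applied to $Ric = 2(n+2)S\,g$ yields a pointwise formula expressing $[\xi_i,\xi_j]_H$ as a fixed multiple of the horizontal gradient $\nabla S$. In dimension at least eleven the same identities additionally force $\nabla S \equiv 0$, which is how \cite[Theorem~4.9]{IMV} concludes its proof; in dimension seven the constancy of $S$ was the only missing piece, and it is now supplied by Theorem~\ref{t:main}.

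Concretely, I would first invoke Theorem~\ref{t:main} to obtain $dS \equiv 0$ on $M$, and then apply the formula extracted from the proof of \cite[Theorem~4.9]{IMV} relating $[\xi_i,\xi_j]_H$ to $\nabla S$ to conclude $[\xi_i,\xi_j]_H = 0$, hence integrability of $V$. No genuine obstacle arises here: all the nontrivial analytic content has already been absorbed into the proof of Theorem~\ref{t:main}, and what remains is the ``immediate consequence'' indicated in the paper.
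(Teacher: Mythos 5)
Your proposal is correct and follows essentially the same route as the paper: the authors likewise obtain the corollary by combining Theorem~\ref{t:main} (which supplies $S=const$, hence $\nabla S\equiv 0$) with \cite[Theorem 4.9]{IMV}, whose proof already yields integrability of $V$ in the seven dimensional case once the qc-scalar curvature is constant. Your sketch of the internal mechanism of that cited proof (the mixed Ricci two-forms and the resulting expression of $[\xi_i,\xi_j]_H$ through $\nabla S$) is consistent with how \cite{IMV} argues, so there is no gap.
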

We note that the integrability of the vertical distribution of a $4n+3$ dimensional qc-Einstein manifold in the case $n>1$, and when $S=const$ and $n=1$ was proven earlier in \cite[Theorem 4.9]{IMV}.   {Thus, in any dimension,  the vertical distribution $V$  of a qc-Einstein manifold is
integrable and we have}
\begin{equation}\label{e: some ricci of einstein}
 \rho_{s}(X,Y)=-S
\omega_s(X,Y), \qquad
 Ric(\xi_s,X)=\rho_s(X,\xi_t)=0, \qquad [\xi_s,\xi_t]\in V.
\end{equation}

 Another Corollary of Theorem \ref{t:main} and  the analysis of the corresponding results  in the case $n>1$\cite{IV2} is
\begin{cor}\label{c:4-form}
If $M$ is a seven dimensional  qc-Einstein manifold then $ d\Omega=0$, where $\Omega$ is the fundamental 4-form defining the quaternionic structure on the horizontal distribution.
\end{cor}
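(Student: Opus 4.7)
The plan is to compute $d\Omega$ directly after first exploiting the rigidity the qc-Einstein hypothesis imposes on the Biquard torsion. Crucially, both ingredients needed in dimension seven are now available: Theorem~\ref{t:main} gives $S=\mathrm{const}$, and Corollary~\ref{c:vert int} gives $[\xi_i,\xi_j]\in V$; together with $T^0=U=0$ these will render the torsion \emph{invisible} to each $\omega_i$.

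Concretely, I would first read off from \eqref{torsion} that every value of the Biquard torsion is vertical: $T(X,Y)=2\sum_s\omega_s(X,Y)\xi_s\in V$ is immediate, $T(\xi_s,X)=\tfrac14(I_sT^0-T^0I_s)X+I_sU(X)=0$ in the qc-Einstein case, and $T(\xi_i,\xi_j)=-S\xi_k-[\xi_i,\xi_j]_H=-S\xi_k$ is vertical by Corollary~\ref{c:vert int}. Because each $\omega_i$ is a purely horizontal 2-form ($\xi\lrcorner\omega_i=0$), every contraction $\omega_i(T(\cdot,\cdot),\cdot)$ vanishes identically. Combining this with the identity $\nabla\omega_i=-\alpha_j\otimes\omega_k+\alpha_k\otimes\omega_j$ from \eqref{der} and the standard expression of $d$ in terms of a connection with torsion, the torsion contributions drop out and I obtain
\[
d\omega_i=-\alpha_j\wedge\omega_k+\alpha_k\wedge\omega_j.
\]

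The final step is to plug this into $d\Omega=2\sum_i d\omega_i\wedge\omega_i$, expand over the three cyclic triples $(i,j,k)$, and group the six resulting terms by the connection 1-form $\alpha_s$ they carry. Using $\omega_s\wedge\omega_t=\omega_t\wedge\omega_s$ for 2-forms, the two terms attached to each fixed $\alpha_s$ (coming from two different cyclic triples) cancel, giving $d\Omega=0$.

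No genuine obstacle is anticipated: this is essentially the Cartan-structure-equation computation already carried out in \cite{IV2} in the case $n>1$, and the only reason it was unavailable in dimension seven was the missing constancy of $S$ and the integrability of $V$, both of which are now supplied by Theorem~\ref{t:main} and Corollary~\ref{c:vert int}. The one mildly delicate point is the sign bookkeeping in passing from $\nabla\omega_i$ to $d\omega_i$ on a connection with torsion, but since the torsion contribution is already known to vanish this reduces to a purely formal verification.
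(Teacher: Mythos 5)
Your proposal is correct and follows essentially the same route as the paper, which simply observes that the $n>1$ computation of \cite[Theorem 4.4.2 c)]{IV3} (i.e.\ the Cartan structure-equation calculation for $d\Omega=2\sum_s d\omega_s\wedge\omega_s$) goes through in dimension seven once Theorem~\ref{t:main} supplies $S=\mathrm{const}$ and Corollary~\ref{c:vert int} supplies the integrability of $V$. Your only difference is presentational: you derive $d\omega_i=-\alpha_j\wedge\omega_k+\alpha_k\wedge\omega_j$ directly from the verticality of the torsion and \eqref{der} rather than quoting the structure equations \eqref{streq}, and the final cancellation over the cyclic triples is exactly the referenced argument.
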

For  details, we refer to the proof of the case $n>1$ in \cite[Theorem 4.4.2. c)]{IV3} which  is valid in the case $n=1$, as well, due to  Theorem \ref{t:main} and Corollary \ref{c:vert int}.
We note that the converse to Corollary \ref{c:4-form} holds true when  $n>1$ , see  \cite{IV2}, while in  the case $n=1$ a counterexample for the implication was found in \cite{CFS}.

\addtocontents{toc}{\protect\setcounter{tocdepth}{2}}

\section{A characterization based on vertical flat connection}\label{three}
In  this section we  show that for any qc manifold $M$ there is a natural linear connection $\tilde \nabla$, defined on the vertical distribution $V$, the latter considered as a vector bundle over $M$. This connection has the remarkable property of being flat exactly when $M$ is qc-Einstein, see Theorem~\ref{flat tilde}, and  will turn out to be a useful technical tool for the geometry of qc Einstein manifolds in the sequel.

We start by introducing a cross-product on the vertical space $V$. Recall that  $h$  \eqref{e:Riem-metric} is the natural extension of the  horizontal metric $g$ to a Riemannian metric on $M$, which induces an inner product, denoted by $\lc.,.\rc$ here, and an orientation on the vertical distribution  $V$. This allows us to introduce also the
cross-product operation $\times:\Lambda^2(V)\rightarrow V$ in the
standard way: $\xi_i\times\xi_j=\xi_k,\ \xi_i\times\xi_i=0$.
The cross product operation is parallel with respect to
any connection on $V$ preserving the inner product $\lc.,.\rc$, in particular,  with respect to the
restriction of the Biquard connection $\nabla$ to $V$. For any  $\xi,\xi',\xi''\in V$, we have the standard relations
\begin{equation}\label{nablax}
\begin{aligned}
%\label{x.x}
&(\xi\times \xi')\times \xi''=\lc \xi,\xi'' \rc \xi' - \lc \xi',\xi'' \rc \xi, \qquad
%\label{x.x.x}
\xi\times (\xi'\times \xi'')=(\xi\times \xi')\times \xi''+\xi'\times (\xi\times \xi''),\\
& \hskip1in \nabla_{\A}(\xi\times \xi')=(\nabla_{\A}\xi)\times
\xi'+\xi\times (\nabla_{\A}\xi') .
\end{aligned}
\end{equation}

In the next lemma we collect some formulas, which will be used in the proof of Theorem~\ref{flat tilde}.
\begin{lemma}\label{Einstein:RT}
{The curvature $R$ and torsion $T$ of the Biquard connection $\nabla$ of a
qc-Einstein manifold satisfy the following
identities}
\begin{equation}\label{thri}
T(\xi,\xi')=-{S}\xi\times \xi',\quad T(\xi,X)=0, \quad R({\A},{\B})\xi=-2{
S}\sum_{s=1}^{3}\omega_s({\A},{\B})\xi_s\times \xi.
\end{equation}
\end{lemma}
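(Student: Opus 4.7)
The plan is to verify the three identities in \eqref{thri} separately. The two torsion identities are near-immediate consequences of \eqref{torsion}, and the curvature identity reduces to a tensorial statement $\rho_s=-S\omega_s$ on all of $TM$.

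The identity $T(\xi,X)=0$ is read off at once from the formula for $T(\xi_s,X)$ in \eqref{torsion} together with the qc-Einstein assumption $T^0=U=0$. For $T(\xi,\xi')=-S\xi\times\xi'$, the last line of \eqref{torsion} gives $T(\xi_i,\xi_j)=-S\xi_k-[\xi_i,\xi_j]_H$; Corollary \ref{c:vert int}, which is available in all dimensions thanks to Theorem \ref{t:main}, forces $[\xi_i,\xi_j]_H=0$, so $T(\xi_i,\xi_j)=-S\xi_k=-S\xi_i\times\xi_j$. Bilinearity and antisymmetry extend this to arbitrary $\xi,\xi'\in V$.

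For the curvature identity I first note that because $\nabla$ preserves the splitting $H\oplus V$, we have $R(\A,\B)\xi_i\in V$ and can expand $R(\A,\B)\xi_i=\sum_t R(\A,\B,\xi_i,\xi_t)\xi_t$. Substituting \eqref{sp1curv} yields $R(\A,\B)\xi_i=2\rho_k(\A,\B)\xi_j-2\rho_j(\A,\B)\xi_k$ for cyclic $(i,j,k)$. The multiplication table $\xi_j\times\xi_i=-\xi_k$, $\xi_k\times\xi_i=\xi_j$, $\xi_i\times\xi_i=0$ shows that the right-hand side claimed in the lemma equals $-2S\omega_k(\A,\B)\xi_j+2S\omega_j(\A,\B)\xi_k$. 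Matching the two expressions reduces the desired identity to the tensorial equality $\rho_s(\A,\B)=-S\omega_s(\A,\B)$ for all $\A,\B\in TM$.

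It remains to verify this last equality. By \eqref{e: some ricci of einstein} it already holds on pairs of horizontal vectors and on mixed pairs (both sides are zero, using $\rho_s(X,\xi_t)=0$ and $\omega_s(X,\xi_t)=0$). The only outstanding case is $\rho_s(\xi_a,\xi_b)$, where the right-hand side vanishes and we must show the left-hand side does as well. For this I would invoke \eqref{vert2}: under $T^0=U=0$ and $T(\xi,X)=0$ all the torsion terms drop out and it reduces to $R(\xi_i,\xi_j,X,Y)=-(\nabla_X\rho_k)(I_iY,\xi_i)$. Expanding the covariant derivative of the 2-form as $X\bigl(\rho_k(I_iY,\xi_i)\bigr)-\rho_k(\nabla_X(I_iY),\xi_i)-\rho_k(I_iY,\nabla_X\xi_i)$, each of the three resulting summands evaluates $\rho_k$ on one horizontal and one vertical argument (using that $\nabla$ preserves $H$ and $V$ separately), and all such values vanish by \eqref{e: some ricci of einstein}. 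Hence $R(\xi_i,\xi_j,X,Y)=0$, and tracing gives $\rho_s(\xi_a,\xi_b)=\frac{1}{4n}R(\xi_a,\xi_b,e_l,I_se_l)=0$. This last checking is the main technical obstacle; once it is in place the stated curvature formula follows by the arithmetic of cross products carried out above.
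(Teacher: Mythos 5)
Your argument is correct and follows essentially the same route as the paper: the torsion identities come from \eqref{torsion} together with the integrability of $V$ (Corollary~\ref{c:vert int}), and the curvature identity is reduced via \eqref{sp1curv} to $\rho_s=-S\omega_s$ on $TM$, with the purely vertical components killed through \eqref{vert2}, \eqref{neww} and \eqref{e: some ricci of einstein}. Your treatment is simply a more explicit write-up of the paper's terse proof, and the cross-product bookkeeping checks out.
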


\begin{proof}
{The first two identities follow directly from \eqref{torsion} and the integrability of the vertical distribution $V$, see Corollary~\ref{c:vert int} and the paragraph after it.}  The last identity follows from  \eqref{vert2}, \eqref{neww} and  \eqref{e: some ricci of einstein}. In particular,  the three Ricci 2-forms $\rho_s(\A,\B)$ vanish unless $\A$ and $\B$ are both
horizontal, in which case we have \eqref{e: some ricci of einstein}. The proof is complete.
\end{proof}

\begin{dfn}
We define a connection $ \widetilde{\nabla}$  on
the vertical vector bundle $V$ of a  qc manifold $M$ as follows
\begin{equation}\label{tildenabla}
 \widetilde{\nabla}_X\xi:=\nabla_X\xi,\qquad
  \widetilde{\nabla}_\xi\xi':=\nabla_\xi\xi'+{
S}(\xi\times \xi').
\end{equation}
\end{dfn}
The main result of this section is
\begin{thrm}\label{flat tilde}
A qc manifold $M$  is qc-Einstein iff the connection $\tilde \nabla$ is flat, $R^{\widetilde{\nabla}}=0$.
\end{thrm}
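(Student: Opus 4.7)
The plan is to compute the three components of $R^{\widetilde{\nabla}}$ --- on $H\times H$, on $V\times H$, and on $V\times V$ --- explicitly for a general qc manifold, and then read off both implications by substitution.

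Since $\nabla$ preserves the splitting $H\oplus V$, the connection $\widetilde{\nabla}$ differs from $\nabla$ on $V$-valued sections only through $\widetilde{\nabla}_{[A,B]_V}$ for the vertical part of the bracket. For horizontal arguments, $[X,Y]_V = -T(X,Y) = -2\sum_s\omega_s(X,Y)\xi_s$ by \eqref{torsion}, so a short calculation yields
\[
R^{\widetilde{\nabla}}(X,Y)\xi \;=\; R(X,Y)\xi \;+\; 2S\sum_{s=1}^{3}\omega_s(X,Y)\,\xi_s\times\xi.
\]
For mixed arguments, using $[\xi,X]_V = -\nabla_X\xi$ and the $\nabla$-parallelism of $\times$ from \eqref{nablax}, most terms cancel and one obtains $R^{\widetilde{\nabla}}(\xi,X)\xi' = R(\xi,X)\xi' - (XS)\,\xi\times\xi'$. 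For purely vertical arguments, the universal identity $T(\xi,\xi')_V = -S\,\xi\times\xi'$ from \eqref{torsion} combined with the Jacobi identity $\alpha\times(\beta\times\gamma) - \beta\times(\alpha\times\gamma) = (\alpha\times\beta)\times\gamma$ (a consequence of \eqref{nablax}) collapses all $S^{2}$-quadratic contributions and yields $R^{\widetilde{\nabla}}(\xi,\xi')\xi'' = R(\xi,\xi')\xi'' + (\xi S)\xi'\times\xi'' - (\xi' S)\xi\times\xi''$.

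For the forward implication, assume $M$ is qc-Einstein. Then $S$ is constant --- by \cite[Theorem~4.9]{IMV} for $n>1$ and by Theorem~\ref{t:main} for $n=1$ --- so every derivative $XS, \xi S, \xi' S$ vanishes. Lemma~\ref{Einstein:RT} supplies $R(X,Y)\xi = -2S\sum_s\omega_s(X,Y)\xi_s\times\xi$, which exactly cancels the correction in the first formula, and $R(\xi,X)\xi' = R(\xi,\xi')\xi'' = 0$ since $\omega_s$ vanishes on any argument that contains a vertical vector. All three components of $R^{\widetilde{\nabla}}$ therefore vanish.

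For the converse, flatness of the first formula forces $R(X,Y,\xi_i,\xi_j) = -2S\,\omega_k(X,Y)$, which combined with \eqref{sp1curv} gives $\rho_s(X,Y) = -S\,\omega_s(X,Y)$ for every $s$ and every $X,Y\in H$. Substituting this into the second line of \eqref{sixtyfour} yields the three identities $T^0(X,Y) + T^0(I_sX,I_sY) = -4U(X,Y)$, $s=1,2,3$. The step I expect to be the main obstacle is the concluding algebraic one: these three relations force $T^0 = -2U$ and $T^0\circ I_s = I_s\circ T^0$ for every $s$, but to conclude $T^0 = U = 0$ one must invoke the structural constraints on $T^0$ --- the anti-commutations $T^0_{\xi_s}I_s = -I_sT^0_{\xi_s}$ together with the intertwining identities displayed after \eqref{to} --- which show $T^0$ admits no quaternion-linear ($I_1,I_2,I_3$-commuting) component within $(sp(n)+sp(1))^{\perp}$. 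By the paragraph containing \eqref{sixtyfour}, the vanishing $T^0 = U = 0$ is exactly the qc-Einstein condition.
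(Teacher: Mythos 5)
Your proposal is correct and follows essentially the same route as the paper: your three block formulas for $R^{\widetilde{\nabla}}$ are exactly the specializations of the paper's single identity \eqref{R_tilda} (obtained there via the difference tensor $L=\widetilde{\nabla}-\nabla$ and the general curvature-difference formula), and both directions are then argued the same way --- constancy of $S$ plus Lemma~\ref{Einstein:RT} for the forward implication, and $\rho_s(X,Y)=-S\omega_s(X,Y)$ fed into \eqref{sp1curv} and \eqref{sixtyfour} with the $Sp(n)Sp(1)$ decomposition (where the anti-commutation $T^0_{\xi_s}I_s=-I_sT^0_{\xi_s}$ indeed rules out a quaternion-commuting component of $T^0$) for the converse. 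Your concluding algebraic step is just a spelled-out version of the paper's ``comparing the $Sp(n)Sp(1)$ components,'' so there is no gap.
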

\begin{proof}We start by relating the curvature $R^{\widetilde{\nabla}}$ of the connection $\widetilde{\nabla}$, cf.  \eqref{tildenabla}, to the curvature of the Biquard connection $\nabla$. To this end,
let $L\ =\ (\widetilde{\nabla}-\nabla)\quad \in\ \Gamma(M,T^*M\otimes V^*\otimes
V)$ be the difference between the two connections on~$V$. Then \eqref{tildenabla} implies
$L_{\A}\xi=L({\A},\xi)={S}[{\A}]_V\times \xi$, where $[{\A}]_V$ is the orthogonal projection of $A$ on $V$.
The curvature tensor $R^{\widetilde{\nabla}}$ of the new connection $\widetilde{\nabla}$ is given in terms of $R$ and $L$ by the well known general formula
\begin{equation}\label{R-R}
R^{\widetilde{\nabla}}({\A},{\B})\xi=R({\A},{\B})\xi+\big(\nabla_{\A}L\big)({\B},\xi)-\big(\nabla_{\B}L\big)({\A},\xi) + \big[L_{\A},L_{\B}\big]\xi+L\big(T({\A},{\B}),\xi\big).
\end{equation}
We proceed by considering each of the terms on the right hand side of \eqref{R-R} separately. We have, cf. \eqref{sp1curv},
\begin{equation}\label{pR-R4}
R(\A,\B)\xi=\left(\sum_{s=1}^3 2\rho_s(\A,\B)\xi_s\right)\times \xi.
\end{equation}
Using \eqref{nablax} and the obvious identity $\nabla_\A
\big(\,[\B]_V\big)=\big[\nabla_\A\B\big]_V$ we obtain
\begin{equation}\label{pR-R1}
\big(\nabla_\A L\big)(\B,\xi)=\nabla_\A
\big(L(\B,\xi)\big)-L\big(\nabla_\A\B,\xi\big)-L\big(\B,\nabla_\A \xi\big)=dS(\A)[{\B}]_V\times
\xi.
\end{equation}
From \eqref{nablax} it follows
\begin{equation}\label{pR-R2}
\big[L_{\A},L_{\B}\big]\xi=\big(L_{\A}\times L_{\B}\big)\times \xi={S}^2\Big(\,[{\A}]_V\times [{\B}]_V\,\Big)\times \xi.
\end{equation}

\noindent The torsion identities \eqref{torsion} imply
\begin{equation}\label{pR-R3}
L\big(T(\A,\B\big),\xi)=S\big[T(\A,\B)\big]_V\times
\xi=S\Big(\,-S[\A]_V\times[\B]_V+2\sum_{s=1}^3\omega_s(\A,\B)\xi_s\,\Big)\times
\xi.
\end{equation}
Finally, a substitution of  \eqref{pR-R4}, \eqref{pR-R1}, \eqref{pR-R2} and \eqref{pR-R3} in the right hand side of formula \eqref{R-R} gives the equivalent relation
\begin{gather}\label{R_tilda}
R^{\widetilde{\nabla}}({\A},{\B})\xi=\left(\sum_{s=1}^3 2\rho_s(\A,\B)\xi_s+dS(\A)[{\B}]_V-dS(\B)[{\A}]_V+2S\sum_{s=1}^3\omega_s(\A,\B)\xi_s\right)\times \xi.
\end{gather}
We are now ready to complete the proof of the theorem. Suppose first that $M$ is a qc-Einstein manifold. By Theorem \ref{t:main} when $n=1$ and \cite{IMV} when $n>1$ it follows that the  qc-scalar curvature is constant. Lemma \ref{Einstein:RT} implies that
$$\sum_{s=1}^3\rho_s(\A,\B)\xi_s=-{ S}\sum_{s=1}^3\omega_s({\A},{\B})\xi_s.$$ Since $dS=0$,   \eqref{R_tilda} gives $R^{\widetilde{\nabla}}=0$, and thus $\widetilde{\nabla}$ is  a flat connection on $V$.

Conversely,  if $\widetilde{\nabla}$ is flat, then
by applying \eqref{R_tilda} with $(\A,\B)=(X,Y)$ we obtain $\rho_s(X,Y)=-{S}\omega_s(X,Y)$. Applying the second formula of \eqref{sixtyfour} we derive  $T^0=0$ and $U=0$
by comparing the $Sp(n)Sp(1)$  components of the obtained  equalities.  Thus, $(M,\eta)$ is a qc Einstein manifold taking into account the first formula in \eqref{sixtyfour}.
\end{proof}

\section{The structure equations of a qc Einstein manifold}

%\subsection{The structure equaions of a qc manifold}\label{ss:str equations}
Let $M$ be a qc manifold with normalized qc-scalar curvature $S$. From \cite[Proposition 3.1]{IV2} we have the structure equations
\begin{equation}\label{streq}
\begin{aligned}
d\eta_i & =2\omega_i-\eta_j\wedge\alpha_k+\eta_k\wedge\alpha_j -
S
\eta_j\wedge\eta_k,\\
d\omega_i & =\omega_j\wedge(\alpha_k+
S\eta_k)-\omega_k\wedge(\alpha_j+ S\eta_j)-\rho_k\wedge\eta_j+
\rho_j\wedge\eta_k+ \frac{1}{2}dS\wedge\eta_j\wedge\eta_k,%\label{strom}.
\end{aligned}
\end{equation}
  where $(\eta_1,\eta_2,\eta_3)$ is a local $\mathbb{R}^3$-valued 1-form defining the given qc-structure and $\alpha_s$ are the corresponding connection 1-forms.  If, {locally, there is an $\mathbb{R}^3$-valued 1-form $\eta=(\eta_1,\eta_2,\eta_3)$ defining the given qc-stricture, such that,} we have the structure equations $d\eta_i=2\omega_i+S\eta_j\wedge \eta_k$ with $S=const$  or  the connection 1-forms vanish on the horizontal space, ${\alpha_i}\vert_{H}=0$, then $M$ is a qc-Einstein manifold of normalized qc-scalar curvature $S$, see \cite[Proposition 3.1]{IV2} and   \cite[Lemma 4.18]{IMV}.

  Conversely, on a qc-Einstein manifold of nowhere vanishing qc-scalar curvature the structure equations \eqref{str_eq_mod} hold true by \cite{IV2} and \cite[Section 4.4.2]{IV3}, taking into account  Corollary \ref{c:3-sasakian}.  The purpose of this section is to give the corresponding results in the case $Scal=0$.  The proof of Theorem \ref{str_eq_mod_th} which is based on the connection defined in  Section \ref{three} rather than the cone over a 3-Sasakian manifold employed in \cite{IV2} and \cite[Theorem 4.4.4]{IV3} works also in the case $Scal\not=0$, thus in the statement of the Theorem we will not make an explicit note of the condition $Scal=0$.
\begin{thrm}\label{str_eq_mod_th} Let  $M$ be a qc manifold. The following conditions are equivalent:
\begin{enumerate}[a)]
\item $M$ is a qc Einstein manifold;
\item  locally, the given qc-structure is defined by  1-form $(\eta_1,\eta_2,\eta_3)$  such that for some constant $S$ we have
\begin{equation}\label{str_eq_mod}
d\eta_i=2\omega_i+S\eta_j\wedge\eta_k;
\end{equation}
\item locally,  the given qc-structure is defined by  1-form $(\eta_1,\eta_2,\eta_3)$  such that the corresponding connection 1-forms vanish on $H$, $\alpha_s=-S\eta_s.$
\end{enumerate}
\end{thrm}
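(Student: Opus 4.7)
The plan is to prove the equivalence via the cycle $(a) \Rightarrow (c) \Rightarrow (b) \Rightarrow (a)$. The implications $(b) \Rightarrow (a)$ and $(c) \Rightarrow (a)$ are recalled in the paragraph immediately preceding the statement and follow from \cite[Proposition 3.1]{IV2} and \cite[Lemma 4.18]{IMV}. The implication $(c) \Rightarrow (b)$ is a one-line substitution: plugging $\alpha_s = -S\eta_s$ into the first line of the structure equations \eqref{streq} collapses the three $S$-terms into the single term $+S\eta_j\wedge\eta_k$. Thus the entire content of the theorem reduces to proving $(a) \Rightarrow (c)$.

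For $(a) \Rightarrow (c)$ the strategy is to use the vertical connection $\widetilde\nabla$ of Section~\ref{three} to produce a privileged local defining triple. Starting from $(a)$, I would first invoke Theorem~\ref{t:main} together with \cite[Theorem 4.9]{IMV} to obtain that the qc-scalar curvature $S$ is constant, and then Theorem~\ref{flat tilde} to conclude that $\widetilde\nabla$ is flat on $V$. A short verification using the antisymmetry of the triple product $\langle\xi\times\xi',\xi''\rangle$ together with the fact that $\nabla$ is metric on $V$ shows that $\widetilde\nabla$ is itself metric on $(V,\langle\cdot,\cdot\rangle)$ and orientation-preserving. Combined with flatness, this produces, in a neighborhood of any point, a parallel, positively oriented, orthonormal frame $\tilde\xi_1,\tilde\xi_2,\tilde\xi_3$ of $V$. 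Since this frame differs from the original Reeb triple $\xi_1,\xi_2,\xi_3$ only by an $SO(3)$-valued gauge, the dual 1-forms $\tilde\eta_s$ defined by $\tilde\eta_s(\tilde\xi_t)=\delta_{st}$ and $\tilde\eta_s|_H=0$ form a local defining 1-form for the same qc-structure.

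The remaining computational step reads off the new Biquard connection 1-forms $\tilde\alpha_s$ in this frame. From the definition \eqref{tildenabla} together with $\widetilde\nabla \tilde\xi_i=0$ one obtains
\[
\nabla_A \tilde\xi_i \ =\ -S\,[A]_V\times \tilde\xi_i \qquad \text{for all } A\in TM.
\]
Comparing with the general formula \eqref{der} once with $A$ horizontal and once with $A=\tilde\xi_t$ forces $\tilde\alpha_s|_H=0$ and $\tilde\alpha_s(\tilde\xi_t)=-S\delta_{st}$, which together read $\tilde\alpha_s=-S\tilde\eta_s$. This is $(c)$, and the $(c) \Rightarrow (b)$ substitution above then yields $(b)$ with the same $\tilde\eta$.

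The main obstacle is conceptual rather than computational: one must justify the passage from ``$\widetilde\nabla$ is flat on $V$'' to ``there exists a parallel orthonormal frame whose dual 1-forms again define the given qc-structure''. This relies on checking that $\widetilde\nabla$ is a metric, orientation-preserving connection on $V$ and on recognizing that the $SO(3)$-gauge freedom in the defining triple $\eta$ does not alter the underlying qc-structure, only its representative. Once these two points are in place, the remainder is a direct comparison with the general formula \eqref{der} for the action of $\nabla$ on the Reeb vectors.
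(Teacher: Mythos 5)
Your proposal is correct and takes essentially the same route as the paper: constancy of $S$ (Theorem~\ref{t:main} and \cite{IMV}) plus flatness of $\widetilde{\nabla}$ (Theorem~\ref{flat tilde}) produce a $\widetilde{\nabla}$-parallel oriented orthonormal frame of $V$, whose dual triple is an $SO(3)$-rotation of $\eta$ and hence defines the same qc-structure, from which the structure equations are read off. The only cosmetic difference is that you obtain c) first by comparing $\nabla_{A}\tilde\xi_i=-S[A]_V\times\tilde\xi_i$ with \eqref{der} and then pass to b) via \eqref{streq}, whereas the paper computes $d\eta_i$ directly to get b); both arguments need the point you correctly address, namely that the rotated frame consists of the Reeb fields of the rotated triple (invariance of \eqref{bi1} under the $SO(3)$ gauge), which the paper instead verifies at the end of its proof.
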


\begin{proof}
 As explained above, the implication c) $\Rightarrow$  a) is  known, while b) $\Rightarrow$  c) is an immediate consequence of \eqref{streq}. Thus, only  the implication a) implies b) needs to be proven,  see also the paragraph preceding the Theorem.

Assume a) holds. We will show that the structure equation in b) are satisfied. By Theorem \ref{t:main} when $n=1$ and \cite{IMV} when $n>1$ it follows $M$ is of constant qc-scalar curvature.  Let $V$ be
the vertical distribution. Clearly, the
connection $\widetilde{\nabla}$ defined in Theorem~\ref{flat tilde} is a flat metric connection on $V$ with respect to the inner product
$\lc.,.\rc$. Therefore the bundle $V$ admits a local  orthonormal oriented frame $K_1,K_2,K_3$
which is $\widetilde{\nabla}$-parallel, i.e., we have
\begin{equation}\label{nabla K_i}
\nabla_{\A}K_i=-S[\A]_V\times K_i.
\end{equation}

\noindent There exists a  triple of local 1-forms  $(\eta_1,\eta_2,\eta_3)$ on $M$ vanishing on $H$, which satisfy
$\eta_s(K_t)=\delta_{st}$. We rewrite \eqref{nabla K_i} as
\begin{equation}\label{nabla K_i2}
\nabla_{\A}K_i=S\big(\eta_j(\A)K_k-\eta_k(\A)K_j\big).
\end{equation}
Since $K_1,K_2,K_3$ is an orthonormal and oriented frame of $V$, we can complete the dual triple $(\eta_1,\eta_2,\eta_3)$  to one defining the given qc-structure.  By differentiating the equalities $\eta_s(K_i)=\delta_{si}$ we obtain using \eqref{nabla K_i2} that
\begin{multline*}
0\ =\ \big(\nabla_{\A}\eta_s\big)(K_i)+\eta_s\big(\nabla_{\A}K_i\big)\ =\ \big(\nabla_{\A}\eta_s\big)(K_i)
+\eta_s\Big( S\big(\eta_j(\A)K_k-\eta_k(\A)K_j\big)\Big)\\
=\ \big(\nabla_{\A}\eta_s\big)(K_i)+S\Big(\eta_j(\A)\delta_{sk}-\eta_k(\A)\delta_{sj}\Big).
\end{multline*}
Hence,
%\begin{equation}\label{nabla K_i3}
$\big(\nabla_{\A}\eta_i\big)(\B)\ =\ S\eta_j\wedge\eta_k(\A,\B)$,
%\end{equation}
which together with Lemma~\ref{Einstein:RT} allows  the computation of
the exterior derivative of $\eta_i$,
\begin{multline}\label{deta Ki}
 d\eta_i(\A,\B)  =  \big(\nabla_\A\eta_i\big)(\B)-\big(\nabla_\B\eta_i\big)(\A)
+\eta_i\big(T(\A,\B)\big)=S\eta_j\wedge\eta_k(\A,\B)-S\eta_j\wedge\eta_k(\B,\A)\\
 + \eta_i\Big(-S[\A]_V\times[\B]_V +2\sum_s\omega_s(\A,\B)\xi_s\Big)
=\Big(2\omega_i+S\eta_j\wedge\eta_k\Big)(\A,\B),
\end{multline}
which proves
\eqref{str_eq_mod}.  Now $\alpha_s|_H=0$ shows that $K_s$ satisfy \eqref{bi1} and therefore $K_s$ are the Reeb vector fields, which completes the  proof of the Theorem.
\end{proof}
We finish the section with another condition characterizing qc-Einstein manifolds, which is useful in some calculations.
\begin{prop} Let  $M$ be a qc manifold. $M$ is qc-Einstein iff for some $\eta$ compatible with the given qc-structure
\begin{equation}\label{hor_domega}
d\omega_s(X,Y,Z)=0.
\end{equation}
\end{prop}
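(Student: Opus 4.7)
The plan is to reduce the statement to an algebraic condition on the $\mathfrak{sp}(1)$-connection 1-forms $\alpha_s$ restricted to the horizontal distribution $H$. For any horizontal $X,Y,Z\in H$, the standard formula for the exterior derivative in terms of a connection with torsion reads
\begin{equation*}
d\omega_i(X,Y,Z) \ =\ \sum_{\text{cyc}} (\nabla_X\omega_i)(Y,Z) \ -\ \sum_{\text{cyc}}\omega_i(T(X,Y),Z);
\end{equation*}
since $T(X,Y)\in V$ by \eqref{torsion} and $\omega_i$ vanishes on $V$, the torsion terms drop out, and substituting \eqref{der} yields
\begin{equation*}
d\omega_i(X,Y,Z) \ =\ (\alpha_k\wedge\omega_j \ -\ \alpha_j\wedge\omega_k)(X,Y,Z).
\end{equation*}
Hence \eqref{hor_domega} is equivalent, for the chosen $\eta$, to the three identities $\alpha_k\wedge\omega_j = \alpha_j\wedge\omega_k$ on $\Lambda^3 H^*$ for cyclic $(i,j,k)$.

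The implication a) $\Rightarrow$ \eqref{hor_domega} is then immediate from Theorem~\ref{str_eq_mod_th}: when $M$ is qc-Einstein we can choose $\eta$ with $\alpha_s = -S\eta_s$, so that $\alpha_s|_H = 0$ and the algebraic identities hold trivially.

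For the converse, I plan to exploit the identities to deduce $\alpha_s|_H = 0$, and then conclude via the remark preceding Theorem~\ref{str_eq_mod_th} (see \cite[Lemma 4.18]{IMV}). Writing $\alpha_s(X) = g(A_s, X)$ for $X\in H$, I will contract the identity $\alpha_k\wedge\omega_j = \alpha_j\wedge\omega_k$ against the triple $(X; e_a, I_j e_a)$ and sum over the orthonormal basis. Using the tracelessness of the $I_s$ and their $g$-skew-adjointness, the two sides evaluate to $(4n-2)\,g(A_k,X)$ and $-2\,g(I_iA_j,X)$, respectively, giving the pointwise identity $(2n-1)A_k = -I_iA_j$ for every cyclic $(i,j,k)$. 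Iterating the three resulting relations with $I_2I_1 = -I_3$ and $I_3^2 = -1$ collapses to the scalar equation $(2n-1)^3 A_1 = -A_1$, forcing $A_1 = 0$ and symmetrically $A_2 = A_3 = 0$.

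The main technical point lies in the converse: from three pairwise identities each linking only two of the $A_s$'s through a complex structure, one needs to extract simultaneous vanishing of all three. The cyclic composition of the identities, which introduces the factor $I_3^2 = -1$, is what prevents any nontrivial solution and closes the argument uniformly in $n\ge 1$.
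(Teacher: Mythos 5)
Your proposal is correct and follows essentially the same route as the paper: both directions hinge on Theorem~\ref{str_eq_mod_th} together with the known fact that $\alpha_s|_H=0$ forces the qc-Einstein condition, and the converse is obtained by contracting the horizontal identity $\big(\alpha_k\wedge\omega_j-\alpha_j\wedge\omega_k\big)|_H=0$ over an orthonormal basis of $H$. The only (harmless) difference is the choice of complex structure in the trace: the paper contracts with $I_i$ and gets $\alpha_j(I_jX)+\alpha_k(I_kX)=0$ directly, while your contraction with $I_j$ yields $(2n-1)A_k=-I_iA_j$, which you then correctly close up by cycling through the quaternion relations, valid for all $n\ge 1$.
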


\begin{proof}
If \eqref{str_eq_mod} are satisfied, then we have
$
0\ =\ d(d\eta_i)\ =\ d\Big(2\omega_i+
S\eta_j\wedge\eta_k\Big),
$
which  implies \eqref{hor_domega}.

Conversely, suppose the given qc-structure is locally defined by  1-form $(\eta_1,\eta_2,\eta_3)$ which satisfies \eqref{hor_domega}. By \eqref{streq} we have $
\Big(\omega_j\wedge\alpha_k-\omega_k\wedge\alpha_j\Big)|_{H}=0,
$
which after a contraction with the endomorphism $I_i$ gives
\begin{multline*}
0=(\omega_j\wedge\alpha_k-\omega_k\wedge\alpha_j)(X,e_a,I_ie_a)\ =\omega_j(X,e_a)\alpha_k(I_ie_a)+\omega_j(e_a,I_ie_a)\alpha_k(X)+
\omega_j(I_ie_a,X)\alpha_k(e_a) \\ -\omega_k(X,e_a)\alpha_j(I_ie_a)-\omega_k(e_a,I_ie_a)\alpha_j(X)-
\omega_k(I_ie_a,X)\alpha_j(e_a)
=2\omega_j(X,e_a)\alpha_k(I_ie_a) - 2\omega_k(X,e_a)\alpha_j(I_ie_a)\\ =
2\alpha_k(I_kX)\ +\ 2\alpha_j(I_jX).
\end{multline*}
Since the above calculation is valid for any even permutation $(i,j,k)$, it follows that $\alpha_s(X)=0$ which completes the  proof of the Proposition.
\end{proof}

\section{The related Riemannian geometry}\label{five}

{A (4n + 3)-dimensional
(pseudo) Riemannian manifold $(M,g)$  is 3-Sasakian  if the cone metric  is a (pseudo) hyper-K\"ahler metric \cite{BG,BGN}.  We note explicitly that in this paper 3-Sasakian manifolds  are to be understood in the wider sense of positive (the usual terminology) or negative 3-Sasakian structures, cf. \cite[Section 2]{IV2} and \cite[Section 4.4.1]{IV3} where the "negative" 3-Sasakian term was adopted in the case when the Riemannian cone is hyper-K\"ahler of signature $ (4n,4)$. Every 3-Sasakian manifold is a qc-Einstein manifold of constant qc-scalar curvature, \cite{Biq1},  \cite{IMV} and \cite{IV2}.  As well known, a positive 3-Sasakian manifold is Einstein with a positive Riemannian scalar curvature \cite{Kas} and, if complete,  it is compact with finite fundamental group due to Myer’s
theorem.  The negative 3-Sasakian structures are Einstein with respect to the corresponding pseudo-Riemannian metric of signature $(4n,3)$ \cite{Kas,Tan}.  In this case, by a simple change of signature, we obtain a positive definite $nS$ metric on $M$,  \cite{Tan,Jel,Kon}.}

 By \cite[Theorem 1.3]{IMV} when $Scal>0$, and \cite{IV2} and \cite[Theorem 4.4.4]{IV3} when $Scal<0$ a qc-Einstein of dimension at least eleven is locally qc-homothetic to a 3-Sasakian structure. The corresponding result  in the seven dimensional case was proven with the extra assumption that the qc-scalar is constant. Thanks to Theorem \ref{t:main} the additional hypothesis is redundant, hence we have the following
\begin{cor}\label{c:3-sasakian}
 A seven dimensional qc-Einstein manifold  of nowhere vanishing qc-scalar curvature is  locally qc-homothetic to a 3-Sasakian structure.
\end{cor}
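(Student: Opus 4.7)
The corollary is an immediate synthesis of Theorem~\ref{t:main} with previously established results, so the proposal is essentially a reduction rather than a new argument. The plan is to invoke Theorem~\ref{t:main} to supply the single missing hypothesis in the pre-existing seven dimensional versions of the qc-homothety theorems.

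Concretely, first I would observe that the statement concerns a seven dimensional qc-Einstein manifold $M$ with $Scal$ nowhere vanishing; by Theorem~\ref{t:main}, $Scal$ is automatically a constant, and since it never vanishes it is in fact a nonzero constant. Thus we may split into two cases according to the sign of $Scal$.

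In the case $Scal>0$, I would apply \cite[Theorem 1.3]{IMV} in its seven dimensional form: the theorem states that a qc-Einstein manifold of \emph{constant} positive qc-scalar curvature is locally qc-homothetic to a (positive) 3-Sasakian structure, the constancy assumption being the only obstruction in dimension seven. Since Theorem~\ref{t:main} now delivers this constancy for free, the conclusion follows. Symmetrically, in the case $Scal<0$, I would invoke the seven dimensional form of the corresponding result in \cite{IV2} and \cite[Theorem 4.4.4]{IV3}, which produces a local qc-homothety with a negative 3-Sasakian structure under the hypothesis of constant negative qc-scalar curvature; again, that hypothesis is now supplied by Theorem~\ref{t:main}.

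There is no genuine obstacle here: the new content of the paper, namely Theorem~\ref{t:main}, was designed precisely to remove the a-priori constancy assumption in the existing $n=1$ arguments, as the introduction explicitly states. The only care needed in the write-up is to record that $Scal$ nowhere vanishing together with $Scal=const$ forces $Scal$ to be a nonzero constant, so that exactly one of the two previously known seven dimensional qc-homothety results applies, and to cite the correct prior reference in each sign case.
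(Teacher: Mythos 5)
Your proposal is correct and coincides with the paper's own argument: the paper likewise notes that the seven dimensional versions of \cite[Theorem 1.3]{IMV} (for $Scal>0$) and of \cite{IV2}, \cite[Theorem 4.4.4]{IV3} (for $Scal<0$) were proven under the extra hypothesis of constant qc-scalar curvature, and that Theorem~\ref{t:main} makes this hypothesis redundant. The sign split and the observation that nowhere vanishing plus constant forces a nonzero constant are exactly the intended reduction.
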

There are many known examples of positive 3-Sasakian manifold, see \cite{BG} and references therein for a nice overview of 3-Sasakian spaces. On the other hand, certian SO(3)-bundles over quaternionic K\"ahler manifolds with negative scalar curvature constructed in \cite{Kon,Tan,Jel} are examples of negative 3-Sasakian manifolds.
Other, explicit examples of negative 3-Sasakian manifolds are constructed also in \cite{AFIV}.

Complete and regular 3-Sasakian manifolds, resp.  $nS$-structures, fiber over a quaternionic K\"ahler manifold with positive, resp. negative, scalar curvature \cite{Is, BGN,Tan,Jel} with fiber $SO(3)$. Conversely, a quaternionic K\"ahler manifold with positive (resp. negative) scalar curvature has a canonical $SO(3)$ principal bundle, the total space of which admits a natural 3-Sasakian (resp. $nS$-) structure \cite{Is,Kon,Tan, BGN, Jel}.

In this section  we describe the properties of qc-Einstein structures of zero qc-scalar curvature, which complement the well known results in the 3-Sasakian  case.  A common feature of the  $Scal=0$ and $Scal\ne 0$ cases is the existence of Killing vector fields.
\begin{lemma}\label{l:killing}
Let $M$ be a qc-Einstein manifold with zero qc-scalar curvature. If $(\eta_1,\eta_2,\eta_3)$ is an $\mathbb{R}^3$-valued local 1-form defining the qc structure as in~\eqref{str_eq_mod}, then the corresponding Reeb vector fields $\xi_1,\xi_2,\xi_3$ are Killing vector fields for the Riemannian metric $h$, cf. \eqref{e:Riem-metric}.
\end{lemma}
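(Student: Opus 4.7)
The plan is to verify directly that $\mathcal{L}_{\xi_s} h = 0$ for $s = 1, 2, 3$, exploiting the simplifications produced by $S = 0$ together with the vanishing $T^0 = U = 0$ of the qc-Einstein torsion. Writing
\[
h \;=\; g \;+\; \eta_1\otimes\eta_1 \;+\; \eta_2\otimes\eta_2 \;+\; \eta_3\otimes\eta_3,
\]
with $g$ viewed as the symmetric $(0,2)$ tensor on $TM$ that extends the horizontal metric by zero on $V$, I would compute the Lie derivative of each summand separately.

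The first step is to show that $\mathcal{L}_{\xi_s}\eta_t = 0$ for all $s,t$. By Cartan's formula and $\eta_t(\xi_s) = \delta_{st}$,
\[
\mathcal{L}_{\xi_s}\eta_t \;=\; d(\eta_t(\xi_s)) + \xi_s\lrcorner d\eta_t \;=\; \xi_s\lrcorner d\eta_t,
\]
and since $S = 0$ the structure equation \eqref{str_eq_mod} reduces to $d\eta_t = 2\omega_t$, yielding $\mathcal{L}_{\xi_s}\eta_t = 2\xi_s\lrcorner\omega_t = 0$, because every vertical vector annihilates each fundamental $2$-form $\omega_t$. Hence $\mathcal{L}_{\xi_s}(\eta_t\otimes\eta_t) = 0$ for each $t$.

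The second step is to establish $\mathcal{L}_{\xi_s} g = 0$ on all of $TM$. On a pair of horizontal vectors $X, Y$, the defining identity \eqref{to} together with $T^0_{\xi_s} = 0$ (a consequence of $T^0 = U = 0$) gives $(\mathcal{L}_{\xi_s} g)(X, Y) = 2g(T^0_{\xi_s}X, Y) = 0$. For the mixed and purely vertical pairs I would invoke Lemma~\ref{Einstein:RT}, which under $S = 0$ asserts $T(\xi_s, \xi_t) = 0$ and $T(\xi_s, X) = 0$, together with the fact that the Biquard connection preserves the splitting $H \oplus V$. These inputs force $[\xi_s, \xi_t] \in V$, after which every remaining term in $(\mathcal{L}_{\xi_s} g)(X, \xi_t)$ and $(\mathcal{L}_{\xi_s} g)(\xi_t, \xi_u)$ pairs $g$ against a vertical argument and therefore vanishes.

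Summing the two pieces gives $\mathcal{L}_{\xi_s} h = 0$, i.e.\ $\xi_s$ is Killing for $h$. I expect no serious obstacle; the content of the lemma lies in the observation that the hypothesis $S = 0$ simultaneously (i) eliminates the $S$-term from $d\eta_i$, so that the individual $\eta_t$ themselves are $\xi_s$-invariant, and (ii) kills the vertical torsion $T(\xi_s, \xi_t)$, so that the vertical distribution is integrable with its canonical frame preserved by each $\xi_s$. Both features fail when $S\neq 0$, where the $\xi_s$ remain Killing for $h$ but only permute the $\eta_t$ among themselves rather than preserving each one individually.
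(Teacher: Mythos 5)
Your proof is correct and follows essentially the same route as the paper: a direct verification that $\mathcal{L}_{\xi_s}h=0$ using exactly the two inputs the paper uses, namely the $S=0$ structure equation \eqref{str_eq_mod} and the vanishing of the torsion ($T^0=U=0$, $T(\xi_s,\xi_t)=0$ from Lemma~\ref{Einstein:RT}). The only difference is organizational: you split $h=g+\sum_t\eta_t\otimes\eta_t$ and kill the $\eta_t$-part by Cartan's formula, $\mathcal{L}_{\xi_s}\eta_t=\xi_s\lrcorner d\eta_t=2\,\xi_s\lrcorner\omega_t=0$, while the paper evaluates $\mathcal{L}_{\xi_s}h$ on vertical--vertical, mixed and horizontal pairs using $\alpha_s=0$ (Theorem~\ref{str_eq_mod_th}~c)), $\nabla\xi_s=0$ and \eqref{to} -- the same facts in a slightly different package.
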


\begin{proof}
By Theorem \ref{str_eq_mod_th} c) we have $\alpha_i=0$, hence
$\nabla_{\A}\xi_i=0$ while  Lemma~\ref{Einstein:RT} yields $T(\xi_s,\xi_t)=0$. Therefore,
$
[\xi_s,\xi_t]=\nabla_{\xi_s}\xi_t-\nabla_{\xi_t}\xi_s-T(\xi_s,\xi_t)=0,
$
which implies for any $i,s,t \in\{1,2,3\}$  we have
$
 ({\mathcal
L_{\xi_i}}h)(\xi_s,\xi_t)=-h([\xi_i,\xi_s],\xi_t)-h(\xi_s,[\xi_i,\xi_t])
 =  0.
$
Furthermore, using $d\eta_j(\xi_i,X)=\alpha_k(X)=0$ we compute
\begin{equation*}
({\mathcal
L}_{\xi_s}h)(\xi_t,X)=-h(\xi_t,[\xi_s,X])=d\eta_t(\xi_s,X)=0.
\end{equation*}
Finally, \eqref{to} gives $({\mathcal L}_{\xi_i}h)(X,Y)=({\mathcal
L}_{\xi_i}g)(X,Y)=2T^0_{\xi_i}(X,Y)=0$, which completes the proof.
\end{proof}

\subsection{The quotient space of a qc Einstein manifold with $S=0$}

The total space of an $\mathbb{R}^3$-bundle  over a hyper-K\"ahler manifold with closed and locally exact K\"ahler forms $2\omega_s = d\eta_s$ with connection
1-forms $\eta_s$ is a qc-structure  determined by the three 1-forms $\eta_s$, which is qc-Einstein of vanishing qc-scalar curvature, see \cite{IV2}. In fact, we  characterize qc-Einstein manifold with vanishing qc-scalar curvature as $\mathbb{R}^3$-bundle over hyper-K\"ahler manifold.

Let  $M$ be a qc-Einstein manifold. As observed in Corollary  \ref{c:vert int} and the paragraph after it  the vertical distribution $V$ is completely integrable hence defines a foliation on $M$.  We recall, taking into account \cite{Pal},  that the quotient space $P=M/V$ is a manifold when the foliation is regular and the quotient topology is Hausdorff.

{If $P$ is a manifold and all the leaves of $V$ are compact, then by Ehresmann's fibration theorem  \cite{Ehr, Pal} it follows that $\Pi:M\rightarrow P$ is a locally trivial fibration and  all the leaves are  isomorphic.  By \cite{Pal},  examples of such foliations are given by  regular foliations   on compact manifolds.  In the case of  a qc-Einstein manifold of non-vanishing qc-scalar curvature,  the leaves of the foliation generated by $V$ are  Riemannian 3-manifold of positive constant curvature. Hence, if the associated  (pseudo) Riemannian metrics on $M$ is complete, then the leaves of the foliation are compact. On the other hand, in the case of vanishing qc-scalar curvature, the leaves of the foliation are flat Riemannian manifolds that may not be compact  as  is, for example,  the case of the quaternionic Heisenberg group.} We summarize the properties of the Reeb foliation on a qc-Einstein manifold of vanishing qc-scalar curvature case in the following

\begin{prop}\label{t:hKquot} Let $M$ be a qc-Einstein manifold with zero qc-scalar curvature.
\begin{enumerate}[a)]
\item If the vertical distribution $V$ is regular and the space of leaves $P=M/V$ with the quotient topology is  Hausdorff, then  $P$ is a locally hyper-K\"ahler manifold.
\item If the leaves of the foliation generated by $V$ are compact then there exists an open dense subset $M_o\subset M$ such that  $P_o:=M_o/V$ is a locally hyper-K\"ahler manifold.
    \end{enumerate}
\end{prop}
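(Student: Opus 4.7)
The plan is to exploit the local structure equations of Theorem~\ref{str_eq_mod_th}. Since $S=0$, one may locally choose 1-forms $(\eta_1,\eta_2,\eta_3)$ defining the given qc-structure with $d\eta_i = 2\omega_i$ and vanishing $\mathfrak{sp}(1)$ connection forms $\alpha_s\equiv 0$. Two consequences are immediate: $d\omega_i = 0$ follows from $d^2\eta_i = 0$, while Lemma~\ref{l:killing} together with the computation in its proof shows that the Reeb vector fields $\xi_1,\xi_2,\xi_3$ are $h$-Killing and mutually commuting, $[\xi_s,\xi_t]=0$.

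Next I would verify that the horizontal metric $g$ and the fundamental 2-forms $\omega_s$ are \emph{basic} for the vertical foliation. Horizontality of $\omega_s$ is built into the definition, and Cartan's formula combined with $d\omega_s=0$ yields $\mathcal{L}_{\xi_i}\omega_s = \xi_i\lrcorner d\omega_s + d(\xi_i\lrcorner\omega_s) = 0$; similarly $\mathcal{L}_{\xi_i}g = 2T^0_{\xi_i} = 0$ by \eqref{to} and $T^0=0$. Under the hypothesis of (a), the submersion $\pi:M\to P$ is globally defined and $d\pi$ identifies $H$ with $TP$; basicness then ensures that $g$ and the $\omega_s$ push down to a Riemannian metric $\bar g$ and three closed 2-forms $\bar\omega_s$ on $P$. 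Defining $\bar I_s$ on $TP$ by $\bar g(\bar I_s X, Y) = \bar\omega_s(X, Y)$ transfers the quaternion relations from $H$ to $TP$ and produces an almost hyper-Hermitian structure on $P$ whose three K\"ahler forms are closed. By the classical Hitchin-type lemma (a Riemannian $4n$-manifold with three compatible almost complex structures satisfying the quaternion identities and having closed K\"ahler forms is hyper-K\"ahler), part (a) follows.

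For (b), when all leaves of $V$ are compact one is in the framework of the Epstein--Edwards--Millett--Sullivan theory of foliations with compact leaves: the set of leaves with trivial (equivalently, finite) holonomy is open and dense in the leaf space, and on its saturation $M_o\subset M$ the induced foliation is regular with Hausdorff quotient $P_o = M_o/V$. Applying (a) to the qc-Einstein manifold $M_o$ then produces the desired locally hyper-K\"ahler structure on $P_o$.

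The main technical point to check carefully is the basic character of $g$ and the $\omega_s$; this is the step where all the vanishings supplied by the qc-Einstein/$S=0$ hypothesis ($T^0$, $U$, $S$, and the $\alpha_s$ on the chosen frame) are actually used. For (b) the substantive observation is that compact leaves alone do not force global regularity (as Sullivan's counterexample shows), so one must invoke the compact-foliation theory to extract the open dense regular subset on which (a) can be applied.
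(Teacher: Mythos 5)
Part a) of your argument is essentially the paper's: the paper also passes to the local frame of Theorem~\ref{str_eq_mod_th} with $S=0$, observes that the horizontal metric $g$ and the closed local fundamental forms $\omega_s$ are projectable, and concludes with Hitchin's lemma; you merely spell out the basicness computations ($\mathcal L_{\xi_i}\omega_s=0$, $\mathcal L_{\xi_i}g=2T^0_{\xi_i}=0$) that the paper delegates to \cite{IMV}. For part b) you take a genuinely different route: the paper uses Lemma~\ref{l:killing} to see that $h$ is bundle-like for the vertical foliation and then invokes Molino \cite{Mo} to conclude that $P=M/V$ is a Hausdorff orbifold, defining $P_o$ as its set of regular points; you instead appeal to the Epstein--Edwards--Millett--Sullivan theory of foliations with all leaves compact, which avoids the metric altogether. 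Your route does work, but two points need more care than your summary gives them: trivial and finite holonomy are \emph{not} equivalent (your parenthetical), and what the compact-foliation theory yields directly is that the locally-bounded-volume (equivalently, finite-holonomy) saturated set is open and dense; inside it the trivial-holonomy leaves form an open (Reeb stability) and dense (Epstein--Millett--Tischler) saturated subset $M_o$, and the Hausdorffness of $P_o=M_o/V$ is then Epstein's equivalence between locally bounded leaf volume and Hausdorff leaf space, after which Palais-regularity holds and part a) applies. The trade-off is clear: the paper's argument leans on the Riemannian structure it has already established (Killing Reeb fields, hence a bundle-like metric) and gets the orbifold statement in one stroke, while yours is purely topological and would apply even without the Killing fields, at the price of importing the compact-foliation machinery and the holonomy bookkeeping just described.
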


\begin{proof} We begin with the proof of a).
 By Theorem~\ref{str_eq_mod_th} we can assume, locally, the structure equations  given in Theorem
\ref{str_eq_mod_th}. This, together with \cite[Lemma 3.2 \&
Theorem 3.12]{IMV} imply  that the horizontal metric $g$, see also
\eqref{to}, and the {closed} local fundamental 2-forms
$\omega_s$, see \eqref{str_eq_mod} with $S=0$,  are projectable.  The claim of part a)  follows from
Hitchin's lemma \cite{Hit}.

We turn to the proof of part b).
Lemma \ref{l:killing} implies that, in particular, the Riemannian metric $h$ on $M$  is bundle-like, i.e., for any two horizontal vector fields $X$ and $Y$ in the normalizer of $\mathcal V$ under the Lie bracket, the equation $\xi h(X,Y)=0$ holds for any vector field $\xi$ in $\mathcal V$. Since all the leaves of the vertical foliation are assumed to be compact, we can apply \cite[Proposition 3.7 ]{Mo},
which shows that
$P=M/V$ is a 4n-dimensional orbifold. In particular $P$ is a Hausdorff space. The regular points of any orbifold are an open dens set. Thus, if we let $P_o$ to be the set of all regular points of $P$, then $P_o$ is an open dens subset of $P$ which is also a manifold. It follows that if $M_o:=\Pi^{-1}(P_o)$ then all the leaves of the restriction of the vertical foliation to $M_o$ are regular and hence the claim of b) follows.
\end{proof}

\subsection{The Riemannian curvature}

Let  $M$ be a qc-Einstein manifold.  Note that, by applying an appropriate qc homothetic transformation, we can aways reduce a  general qc-Einstein structure to one whose normalized qc-scalar curvature $S$ equals $0,2$ or -2. Consider the one-parameter family of (pseudo)  Riemannian metrics  $g^{\lambda},\
\lambda\ne 0$ on  $M$ by letting
$h^{\lambda}(\A,\B)\ :=\ h(\A,\B)+(\lambda-1)h|_V.$ Let $\nabla^\lambda$ be the Levi-Civita connection of $h^\lambda.$
Note that $h^{\lambda}$ is a positive-definite metric when $\lambda>0$ and
has signature $(4n,3)$ when $\lambda<0$.

Let us recall that, if $S=2$ and  $\lambda=1$  the Riemannian metric $h=h^\lambda$ is a 3-Sasakian metric on $M$. In particular, it is an Einstein metric of positive Riemannian scalar curvature (4n + 2)(4n + 3) \cite{Kas}. There is  also a second Einstein metric, the "squashed" metric, in the family $h^\lambda$ when $\lambda={1}/{(2n+3)}$, see \cite{BG}. The case $S=-2$ is completely analogous. Here we have two distinct pseudo-Riemannian Einstein metrics  corresponding to  $\lambda=-1$ and $\lambda=-{1}/{(2n+3)}$. The first one defines a negative 3-Sasaskian structure.  On the other hand, the metric $h^{\lambda}$ with $ \lambda =1$ (assuming $S=-2$) gives an $nS$ structure on $M$. In \cite{Tan}, it was shown that the  Riemannian
Ricci tensor of the latter has precisely two constant eigenvalues, $ - 4n - 14$  (of multiplicity $4n$) and
$4n + 2$  (of multiplicity $3$),  and that the Riemannian scalar curvature is the negative constant $-16n^2 - 44n + 6$. In particular, in this case,  $(M,h^{\lambda})$ is an example of an A-manifold in the terminology of \cite{Gr}.

The following proposition  addresses the case $S=0$. However, the argument is valid for all values of $S$ and $\lambda\ne 0$. In particular, we obtain new proofs of the above mentioned results concerning the cases of positive and negative 3-Sasakian structures.

\begin{prop}\label{p:einst m} Let $M$ be a qc-Einstein manifold with normalized qc-scalar curvature  ${S}$. For a vector field $A$, let $[A]_V$ denote the orthogonal projection of $A$ to the vertical space $V$.

%\item
The (pseudo) Riemannian Ricci and scalar curvatures of  $h^{\lambda}$ are given by
\begin{align} \label{Ric-lambda}
Ric^{\lambda}(\A,\B)
&=\Big(4n\lambda+\frac{S^2}{2\lambda}\Big)h^\lambda\Big([\A]_V,[\B]_V\Big)+\Big(2{S}(n+2)-6\lambda\Big)h^\lambda\Big([\A]_H,[\B]_H\Big)\\
Scal^{\lambda} &= \frac{1}{\lambda}\Big(-12n\lambda^2+8n(n+2)S\lambda+\frac{3}{2}{S}^2\Big).
\end{align}
%\end{enumerate}
In particular, if $S=0$, the Ricci curvature of each metric in the family $h^{\lambda}$ has exactly two different constant eigenvalues of multiplicities $4n$ and $3$ respectively.
\end{prop}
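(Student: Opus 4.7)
The plan is to write the Levi-Civita connection $\nabla^\lambda$ of $h^\lambda$ as $\nabla^\lambda = \nabla + D^\lambda$, where $\nabla$ is the Biquard connection and $D^\lambda$ is a tensor explicitly determined by the Biquard torsion, and then read off $Ric^\lambda$ by a direct $h^\lambda$-trace. By Theorem~\ref{str_eq_mod_th}(c) and \eqref{der}, on a qc-Einstein manifold one may choose the defining 1-forms so that $\alpha_s = -S\eta_s$; consequently $\nabla_X \xi_s = 0$, $\nabla_X I_s = 0$, $\nabla_X\omega_s = 0$ for horizontal $X$, while $\nabla_{\xi_s}\xi_t = -S\,\xi_s\times\xi_t$. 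Lemma~\ref{Einstein:RT} reduces the torsion to $T(X,Y) = 2\sum_s\omega_s(X,Y)\xi_s$, $T(\xi_s,X) = 0$, $T(\xi_s,\xi_t) = -S\,\xi_s\times\xi_t$. Since $\nabla$ is metric for $h^\lambda$ (a consequence of $\nabla h = 0$ and of $\nabla$ preserving $V$), the difference tensor is determined by
\begin{equation*}
2h^\lambda(D^\lambda_{\A}\B,\C) = -h^\lambda(T(\A,\B),\C) + h^\lambda(T(\A,\C),\B) + h^\lambda(T(\B,\C),\A),
\end{equation*}
and a short case analysis yields
\begin{equation*}
D^\lambda_X Y = -\sum_{s=1}^3\omega_s(X,Y)\xi_s,\qquad D^\lambda_X\xi_s = D^\lambda_{\xi_s}X = \lambda I_s X,\qquad D^\lambda_{\xi_s}\xi_t = \tfrac{S}{2}\,\xi_s\times\xi_t.
\end{equation*}

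The curvature of $\nabla^\lambda$ is then computed from
\begin{equation*}
R^\lambda(\A,\B)\C = R(\A,\B)\C + (\nabla_{\A}D^\lambda)(\B,\C) - (\nabla_{\B}D^\lambda)(\A,\C) + [D^\lambda_{\A}, D^\lambda_{\B}]\C + D^\lambda(T(\A,\B),\C),
\end{equation*}
and the Ricci is extracted as the $h^\lambda$-trace
\begin{equation*}
Ric^\lambda(\A,\B) = \sum_{a=1}^{4n} R^\lambda(e_a,\A,\B,e_a) + \frac{1}{\lambda}\sum_{s=1}^{3} R^\lambda(\xi_s,\A,\B,\xi_s).
\end{equation*}
For $(\A,\B) = (X,Y)$ horizontal, the $(\nabla D^\lambda)$-terms vanish thanks to $\nabla I_s = \nabla\omega_s = \nabla\xi_s = 0$ along $H$, and the surviving $[D^\lambda,D^\lambda]$ and $D^\lambda\circ T$ pieces reduce via the standard identities $\omega_s(X,I_sY) = g(X,Y)$ and $\sum_a\omega_s(e_a,X)\omega_s(e_a,Y) = g(X,Y)$; combined with the qc-Einstein horizontal Ricci $Ric(X,Y) = 2(n+2)Sg(X,Y)$ from \eqref{sixtyfour}, this produces the horizontal eigenvalue $2(n+2)S - 6\lambda$. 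For $(\A,\B) = (\xi_s,\xi_t)$ vertical, Lemma~\ref{Einstein:RT} supplies $R(\A,\B)\xi$ and, together with the cross-product identities \eqref{nablax}, the pieces assemble into the vertical eigenvalue $4n\lambda + S^2/(2\lambda)$. The mixed component $Ric^\lambda(X,\xi_s) = 0$ is verified by an analogous trace computation using $T(\xi_s,X) = 0$, Lemma~\ref{Einstein:RT}, and the Biquard identity $Ric(X,\xi_s) = 0$ from \eqref{e: some ricci of einstein}.

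Summing the diagonal pieces gives
\begin{equation*}
Scal^\lambda = 4n\,[\,2(n+2)S - 6\lambda\,] + \frac{3}{\lambda}\bigl[\,4n\lambda^2 + \tfrac{S^2}{2}\,\bigr],
\end{equation*}
which rearranges to the announced formula; in particular, when $S = 0$ the eigenvalues $-6\lambda$ (horizontal, multiplicity $4n$) and $4n\lambda$ (vertical, multiplicity $3$) are both constant on $M$ and distinct for every $\lambda\neq 0$. The main obstacle will be the bookkeeping for the vertical-vertical piece, since $\nabla_{\xi_s}\xi_t$, $\nabla_{\xi_s}I_t$, and $\nabla_{\xi_s}\omega_t$ are all non-zero and controlled by the cyclic convention \eqref{e:notation}; one must therefore systematically exploit the compatibility $\nabla(\xi\times\xi') = (\nabla\xi)\times\xi' + \xi\times(\nabla\xi')$ of \eqref{nablax} together with $\rho_s(X,Y) = -S\omega_s(X,Y)$ from \eqref{e: some ricci of einstein} rather than grinding indices. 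Useful consistency checks are the specializations $(S,\lambda) = (\pm 2, \pm 1)$, which reproduce the Einstein constant $4n+2$ of positive and negative 3-Sasakian metrics, $(S,\lambda) = (2,1/(2n+3))$, which recovers the squashed 3-Sasakian Einstein condition, and $(S,\lambda) = (-2,1)$, which yields the two-eigenvalue $nS$-Ricci of \cite{Tan}.
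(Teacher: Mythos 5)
Your proposal is correct and follows essentially the same route as the paper: your difference tensor $D^\lambda$ coincides with the tensor $L=\nabla^{\lambda}-\nabla$ of \eqref{nabla-lambda} (the paper verifies this ansatz by checking metricity and torsion-freeness, you derive it from the torsion via the standard Koszul-type formula, which is equivalent), and both arguments then apply the curvature-difference formula \eqref{R-R2}, use that the difference tensor is $\nabla$-parallel (constancy of $S$), trace with the $h^\lambda$-weights, and insert the qc-Einstein horizontal Ricci from \eqref{sixtyfour}. The resulting eigenvalues and scalar curvature, as well as your 3-Sasakian and $nS$ consistency checks, agree with \eqref{Ric-lambda}.
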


\begin{proof}
We start by noting that the difference $L=\nabla^{\lambda}-\nabla$  between the Levi-Cevita connection $\nabla^{\lambda}$ and the Biquard connection $\nabla$ is given by
\begin{equation}\label{nabla-lambda}
L(A,B)\equiv \nabla^{\lambda}_{\A}\B-\nabla_{\A}\B= \frac{S}{2}[\A]_V\times [\B]_V + \sum_{s=1}^3\Big\{-\omega_s(\A,\B)\xi_s+\lambda\eta_s(\A)I_s\B+\lambda\eta_s(\B)I_s\A \Big\}.
\end{equation}
Indeed, if   we let $D_\A\B:=\nabla_\A\B+L(\A,\B)$, then $h^\lambda(L(\A,\B), \C)$ is  skew symmetric in $\B$ and $\C$, hence the connection $D$ preserves the metric $h^\lambda$. Furthermore, the torsion tensor of $D$
vanishes since
$h^\lambda(L(\A,\B),\C)-h^\lambda(L(\B,\A),\C)=-h^{\lambda}(T(\A,\B),\C).$ The latter follows from the formula for $T$ in Lemma~\ref{Einstein:RT}.   Thus $D$  is the Levi-Civita connection of $h^\lambda$.

The well known formula for the difference $R^{\lambda}-R$ between the curvature tensors of two connections $\nabla^{\lambda}$ and $\nabla$ gives
\begin{equation}\label{R-R2}
R^{\lambda}({\A},{\B})\C\ -\ R({\A},{\B})\C\ =\ (\nabla_{\A}L)({\B},\C)\ -\ (\nabla_{\B}L)({\A},\C)\ +\  [L_{\A},L_{\B}]\C+L(T({\A},{\B}),\C).
\end{equation}
From \eqref{nabla-lambda}, it follows $L$  is  $\nabla$-parallel. Thus,
in the right hand side of the above formula only the last two terms are non-zero.  Furthermore, we have that
$[L_{\A},L_{\B}]\C=L(\A, L(\B,\C))-L(\B, L(\A,\C))$. A straightforward
computation  gives
\begin{multline}\label{R_lambda}
R^\lambda(A,B)C = R(A,B)C + h^{\lambda}\Big([B]_V,[C]_V\Big)\Big(\frac{S^2}{4\lambda}[A]_V+\lambda[A]_H\Big) -  h^{\lambda}\Big([A]_V,[C]_V\Big)\Big(\frac{S^2}{4\lambda}[B]_V+\lambda[B]_H\Big)\\
+\sum_{(i,j,k)-\text{cyclic}}\Big\{\ \Big(\frac{S}{2}-\lambda\Big)\eta_k(A)\omega_j(B,C)\ -\ \Big(\frac{S}{2}-\lambda\Big)\eta_k(B)\omega_j(A,C)
\\
 -\ \Big(\frac{S}{2}-\lambda\Big)\eta_j(A)\omega_k(B,C)\ +\ \Big(\frac{S}{2}-\lambda\Big)\eta_j(B)\omega_k(A,C)\  + \ (S+2\lambda)\eta_k(C)\omega_j(A,B)
\\
 -\ (S+2\lambda)\eta_j(C)\omega_k(A,B)\ -\ \lambda\eta_i(B)h^\lambda\Big([A]_H,[C]_H\Big)\ +\ \lambda\eta_i(A) h^\lambda\Big([B]_H,[C]_H\Big)\ \Big\}\xi_i\\
\ +\sum_{(i,j,k)-\text{cyclic}} \Big\{\ \Big(\frac{\lambda S}{2}-\lambda^2 \Big)\eta_j\wedge\eta_k(B,C) I_iA
-\ \Big(\frac{\lambda S}{2}-\lambda^2 \Big)\eta_j\wedge\eta_k(A,C) I_iB
\\-\ (\lambda S-2\lambda^2)\eta_j\wedge\eta_k(A,B) I_iC
-\
\lambda\omega_i(B,C)I_iA
\ +\ \lambda\omega_i(A,C)I_iB\ +\ 2\lambda\omega_i(A,B)I_iC \ \Big\}.
\end{multline}
After taking the  trace with respect to $A$ and $D$ in equation \eqref{R_lambda}, we obtain
$$Ric^{\lambda}(\B,\C)=Ric\Big([\B]_H,[\C]_H\Big)+\Big(4n\lambda+\frac{S^2}{2\lambda}\Big)h^\lambda\Big([\B]_V,[\C]_V\Big)-6\lambda h^\lambda\Big([\B]_H,[\C]_H\Big).$$
Since $M$ is assumed to be qc Einstein, we have
\begin{align}
Ric\Big([\B]_H,[\C]_H\Big)=\frac{Scal}{4n}g\Big([\B]_H,[\C]_H\Big)=2(n+2){S} h^\lambda\Big([\B]_H,[\C]_H\Big),
\end{align} which yields~\eqref{Ric-lambda}. Taking one more  trace in \eqref{Ric-lambda} gives the formula for the scalar curvature.
\end{proof}

\end{document}